\newtheoremstyle{Normal}{}{}{}{}{\bfseries}{:}{.5em}{}
\theoremstyle{Normal}
\newtheorem{theorem}{Theorem}[section]
\newtheorem{prop}[theorem]{Proposition}
\newtheorem{cor}[theorem]{Corollary}
\newtheorem{conj}[theorem]{Conjecture}
\newtheorem{remark}[theorem]{Remark}
\newtheorem{condition}[theorem]{Condition}
\newtheorem{example}[theorem]{Example}
\newtheorem{obs}[theorem]{Observation}
\newtheorem{openProblem}[theorem]{Open Problem}
\newcommand{\EE}{\mathbb E}
\newcommand{\II}{\mathbb I}
\newcommand{\PP}{\mathbb P}
\newcommand{\RR}{\mathbb R}
\newcommand{\ZZ}{\mathbb Z}
\newcommand{\Rate}{\mathcal K}
\newcommand{\chem}{\mathcal I}
\newcommand{\CoarseStateSpace}{\mathcal N}
\newcommand{\qqquad}{\quad\quad}
\title{Stochastic Reaction Networks Within Interacting Compartments with Content-Dependent Fragmentation}
\author{David F. Anderson\thanks{Department of Mathematics, University of Wisconsin--Madison, USA. \href{mailto:anderson@math.wisc.edu}{anderson@math.wisc.edu}, NSF Support via DMS-2051498}
\and 
Aidan S. Howells\thanks{Dipartimento di Scienze Matematiche, Politecnico di Torino, Turin, Italy. \href{mailto:aidan.howells@polito.it}{aidan.howells@polito.it},
support from MUR PRIN grant number 2022XRWY7W. Corresponding author.}
\and
Diego Rojas La Luz\thanks{Department of Mathematics, University of Wisconsin--Madison, USA. \href{mailto:rojaslaluz@wisc.edu}{rojaslaluz@wisc.edu}, partially supported by the Fulbright Program.}
}
\begin{document}

\maketitle

\begin{abstract}
Stochastic reaction networks with mass-action kinetics provide a useful framework for understanding processes---biochemical and otherwise---in homogeneous environments. However, cellular reactions are often compartmentalized, either at the cell level or within cells, and hence non-homogeneous.
We investigate a model of compartmentalization in which the rate of fragmentation of a compartment depends on the abundance of some designated species inside that compartment. The particular model of study is part of a general framework for compartmentalized chemistry with dynamic compartments that was proposed in \cite{Duso_Zechner_2020}. This paper builds on \cite{Anderson_Howells_2023} where the special case where the compartment dynamics do not depend on their contents was studied mathematically.
In particular, we demonstrate that the explosivity characterization from \cite{Anderson_Howells_2023} fails in this setting and provide new sufficient conditions for non-explosivity and positive recurrence, under the assumption that the underlying CRN admits a linear Lyapunov function. These results extend the theoretical foundation for modeling content-mediated compartment dynamics, with implications for systems such as cell division and intracellular transport.
\end{abstract}

\section{Introduction}

Chemical reaction networks (CRNs) have been studied mathematically for over fifty years \cite{Horn_Jackson_1972}, with applications spanning ecology \cite{McKane_Newman_2004}, gene regulation \cite{Barbuti_Gori_Milazzo_Nasti_2020}, infection modeling \cite{Srivastava_You_Summers_Yin_2002}, and many other domains, such as synthetic biology, chemical engineering, and computational theory \cite{anderson2025chemical, soloveichik2010dna, van2015programmable}.
Several well-established approaches exist for modeling these systems, most commonly as deterministic systems of ordinary differential equations (ODEs) or stochastic continuous-time Markov chains (CTMCs). In the CTMC approach, which is the exclusive focus of this paper, the most common choice of transition rates for the Markov chain is mass-action kinetics. Mass-action kinetics is one of the earliest and most biologically motivated frameworks, especially in the case where the environment is homogeneous.

A variety of other stochastic models of CRNs have been developed to address the case where the environment is not homogeneous. For example, in reaction-diffusion models, each molecule has a spatial position that evolves over time, and reactions occur only when molecules are sufficiently close, with the precise definition of `close' depending on the model; see \cite{Erban_Othmer_2014} or the introduction to \cite{Agbanusi_Isaacson_2014}. Another approach is to replace mass-action kinetics with alternatives better suited to heterogeneous systems \cite{Miangolarra_Castellana_2022}. A further strategy is to divide the environment into compartments (e.g.,  patches, voxels, or cells, etc.) and restrict reactions to be within compartments \cite{McKane_Newman_2004, Isaacson_2013, Popovic_McKinley_Reed_2011}. In 2020, a variation on this idea was proposed, with \cite{Duso_Zechner_2020} introducing a general framework for stochastic compartment CRN models where the compartments were themselves dynamic, allowed to merge, divide, die off, etc.

An initial mathematical analysis of the framework introduced in \cite{Duso_Zechner_2020} was carried out in \cite{Anderson_Howells_2023}, focusing on the special case where the total number of compartments evolves according to the CRN $0 \leftrightarrows C \leftrightarrows 2C$, independently of their internal molecular contents. While mathematically tractable, this case is limited in its ability to model biologically realistic phenomena such as cell division, where compartment behavior is regulated by internal species. Notably, even the second toy model in \cite{Duso_Zechner_2020} features fragmentation rates that depend linearly on the count of a particular species within each compartment—a scenario not covered by the analysis in \cite{Anderson_Howells_2023}. In this paper, we extend that analysis to include models in which the fragmentation rate of a compartment is directly proportional to the abundance of a designated species it contains.

In \cite{Anderson_Howells_2023}, the behavior of compartment models was characterized across all parameter regimes, including conditions for explosion, recurrence, and positive recurrence. A key result from that work is that the compartment model explodes if and only if the underlying CRN---when considered without compartmentalization---is itself explosive. In the present setting, where fragmentation rates depend on compartment contents, this equivalence no longer holds (see Example \ref{ex:sometimes-explosive}). Nevertheless, we establish new results concerning explosivity, including Theorem \ref{thm:frag-nonexplosive}, which provides a mild sufficient condition for non-explosivity. Similarly, while the recurrence and positive recurrence results from \cite{Anderson_Howells_2023} do not directly carry over, we prove analogous results in this new context, such as Theorem \ref{thm:gen-frag-pos-recurrent}.

Both Theorems \ref{thm:frag-nonexplosive} and \ref{thm:gen-frag-pos-recurrent} rely on an additional technical assumption about the CRN being compartmentalized -- namely, the existence of a Lyapunov function that is linear and satisfies a standard generator bound. Linear Lyapunov functions have been studied extensively in prior work. 
For example, \cite{Gupta_Briat_Khammash_2014} introduces a sufficient condition, stronger than any Lyapunov condition used in this paper, for stability of stochastic reaction networks. They provide sufficient criteria for this condition to hold in unimolecular and bimolecular networks (see Sections S3 and S4 of their supplementary material) and verify it for a range of biologically motivated examples (Sections S5–S12).

Much of the material in this paper was originally developed as part of the PhD thesis of Howells \cite{Howells_2024}, which is unpublished. In particular, the foundational framework, key definitions, and several core results---including Theorem~\ref{thm:frag-nonexplosive} and Corollary~\ref{cor:frag-nonexplosive}---first appeared in that thesis. The present paper builds on that work by expanding the analysis, introducing new examples, and proving additional results, such as those in Section~\ref{sec:Explode}. For instance, Example~\ref{ex:sometimes-explosive} generalizes a case that was only partially treated in the thesis for $p \in \{0,1\}$.

The remainder of the paper is organized as follows. In the next section, we give an overview of stochastic chemical reaction networks with mass-action kinetics and then introduce the compartment models that are the main object of study in this paper. In section \ref{sec:Explode}, we provide a sufficient condition for these compartment models to be non-explosive (Theorem \ref{thm:frag-nonexplosive}), along with two examples (Examples  \ref{ex:nonexplosive-without-linear-Lyapunov} and \ref{ex:sometimes-explosive}) that illustrate that this condition is not necessary. 
In section \ref{sec:Recurrence}, we provide a sufficient condition for positive recurrence in models where compartments can both coagulate and exit (Theorem \ref{thm:gen-frag-pos-recurrent}), and then show that, in at least some cases, the presence of either coagulation or exit alone is sufficient (Proposition \ref{prop:spec-frag-pos-recurrent}). Our techniques have not yielded any nontrivial sufficient conditions for transience in general, but may do so for specific models (Proposition \ref{prop:spec-frag-transient}). Finally, Appendix~\ref{sec:Lyapunov} collects the Lyapunov function theorems used throughout the paper.
A standard knowledge of continuous-time Markov chains is assumed \cite{Norris_1997}.

\section{Mathematical Models}

\subsection{Stochastic Reaction Networks}

A chemical reaction network, or CRN, consists of the following: A collection $\mathcal S$ of \emph{species}, and a collection $\mathcal R$ of \emph{reactions} $\nu\to\nu'$, where $\nu$ and $\nu'$ are linear combinations of species with coefficients taken from $\ZZ_{\ge0}$. Linear combinations of species which appear in some reaction are called \emph{complexes}. For a reaction $\nu\to\nu'$, the complexes $\nu$ and $\nu'$ are referred to as the \emph{source complex} and \emph{product complex}, respectively, of the reaction.

Suppose we have a fixed CRN with species $\mathcal S$ and reactions $\mathcal R$, and let $d:=|\mathcal S|$ be the number of different species. We adopt the standard convention of fixing an ordering $S_1,\dots,S_d$ of the species set $\mathcal{S}$, allowing us to identify each complex $\nu=\sum_{i=1}^d \nu_i S_i$ with the vector $(\nu_1,\dots,\nu_d)\in\ZZ_{\ge0}^d$. As discussed in the introduction, one way to construct a dynamical system from a given CRN is as a CTMC using stochastic mass-action kinetics. In this case, the state of the Markov chain is given by a vector in $\ZZ_{\ge0}^d$. A state vector $x\in\ZZ_{\ge0}^d$ is interpreted as the molecular count of each species, and the possible transitions are given by the reactions in the sense that when the system is in state $x\in\ZZ_{\ge0}^d$ the only possible next states are those of the form $x+\nu' - \nu$ for some reaction $\nu\to\nu'$. In \emph{stochastic mass-action kinetics}, the rate $\lambda_{\nu\to\nu'}(x)$ of the transition $x\mapsto x + \nu' -\nu$ is directly proportional to the number of ways the species in the input complex $\nu$ can be chosen from the current state $x$:
\begin{align}
\label{eq:Mass-action}
    \lambda_{\nu\to\nu'}(x)
    =\kappa_{\nu\to\nu'}\prod_{j=1}^d\binom{x_j}{\nu_j}
\end{align}
for some \emph{rate constant} $\kappa_{\nu\to\nu'}\in \RR_{\ge0}$, with the convention that $\binom x0=1$ and $\binom xy=0$ for all $0\le x<y$. If $\chem$ is a particular CRN (that is, a particular set of species and accompanying reactions), then we will use $\chem_\Rate$ to denote the CRN with a particular choice of rate constants. Note that an alternative convention, which is the most commonly used in the stochastic reaction networks literature, is to define mass-action kinetics as $\lambda^{\text{alt}}_{\nu \to \nu'}(x) = \kappa^{\text{alt}}_{\nu \to \nu'} \prod_{j=1}^d \frac{x_j!}{(x_j - \nu_j)!},$
where the constant terms $\nu_j!$ are absorbed into the rate constant $\kappa^{\text{alt}}_{\nu \to \nu'}$ (and the ``alt'' stressing that this is an alternative formulation not used here). This formulation is equivalent to \eqref{eq:Mass-action} but shifts the burden of combinatorial normalization into the rate constants.  We adopt the formulation in \eqref{eq:Mass-action}, as it facilitates certain computations---particularly those involving Lyapunov functions---and aligns with the conventions used in prior work on compartmentalized CRNs \cite{Anderson_Howells_2023}.

\begin{example}
One possible CRN, complete with an assignment of rate constants, is the following:
\begin{equation*}
\begin{tikzcd}
    0\arrow{r}{1} &E&E+2S\arrow{r}{2\alpha}& E+3S.
\end{tikzcd}
\end{equation*}
Here $\alpha$ is some positive number. Because there are two species, the state space of the network is $\ZZ_{\ge0}^2$. If the chain is in state $(e,s)\in\ZZ_{\ge0}^2$, then the next state must be one of $(e+1,s)$ or $(e,s+1)$. The rate of this first transition is $1$, and the rate of the second is
\[
2\alpha\binom e1\binom s2
=2\alpha e\frac{s(s-1)}2
=\alpha es(s-1).
\]
\hfill $\triangle$
\end{example}

Suppose we are given a mass-action CRN $\chem_\Rate$ with $d$ species $\mathcal S$ and reactions $\mathcal R$. Then one characterization of the process is as the CTMC on $\ZZ^d_{\ge 0}$ with infinitesimal generator $\mathcal A$ which sends a function $f:\ZZ^d_{\ge 0}\to\RR$ to the function $\mathcal Af:\ZZ^d_{\ge 0}\to\RR$ defined by
\begin{align}\label{eq:chem_generator}
    \mathcal Af(x) = \sum_{\nu\to\nu' \in \mathcal R} \lambda_{\nu\to\nu'}(x) (f(x+\nu' - \nu)-f(x)),
\end{align}
provided $f$ is sufficiently nice \cite{Ethier_Kurtz_2005}. This perspective of $\chem_\Rate$ in terms of its generator will be useful for us because we will make frequent use of Lyapunov function techniques (see section \ref{sec:Lyapunov} for the various Lyapunov conditions used in this paper).

\subsection{Compartments With Content-Dependent Fragmentation}

In \cite{Anderson_Howells_2023}, we studied stochastic CRNs within interacting compartments. In that paper, the number of compartments evolved according to a CRN of the form $0\leftrightarrows C\leftrightarrows 2C$, with no feedback from the contents of the compartments.  Between compartment-level transitions, the contents of each compartment evolved independently according to a fixed CRN $\chem_\Rate$ with $d$ species. We begin by recalling the assumptions about compartment transitions introduced in \cite{Anderson_Howells_2023}, which we adopt here with one modification.
\begin{itemize}
    \item A reaction of the form $0\to C$ is called a \emph{compartment inflow} reaction, and its rate constant will be denoted $\kappa_I$. When a compartment inflow happens, the state of the new compartment is distributed according to a fixed probability distribution $\mu$ on the state space $\ZZ_{\ge0}^d$ of $\chem_\Rate$.

    \item A reaction of the form $C\to 0$ is called a \emph{compartment exit} reaction, and its rate constant will be denoted $\kappa_E$. When a compartment exits the system, we assume that the compartment and all of its contents are deleted (rather than, say, being distributed to the other compartments).

    \item A reaction of the form $C \to 2C$ is called a \emph{compartment fragmentation} reaction, and its rate constant is denoted $\kappa_F$. The compartment undergoing fragmentation, called the \emph{parent compartment}, is removed from the system and replaced by two \emph{daughter compartments} whose states sum to that of the parent. For each possible parent state, we assume a fixed probability distribution $\psi$ that describes how its molecules are distributed across the daughters.

    \item A reaction of the form $2C\to C$ is called a \emph{compartment coagulation} reaction, and its rate constant will be denoted $\kappa_C$. When a coagulation happens, the state of the resulting compartment is assumed to be the sum of the states of the compartments that coagulated, such that total mass is again conserved.
\end{itemize}
In this paper, the only change to the above concerns the fragmentation reactions. Instead of letting each compartment fragment at rate $\kappa_F$, we consider a process with feedback, where the rate of fragmentation of each compartment is a function of its contents. Specifically, we will assume that $S$ is some species in the chemistry $\chem_\Rate$, and that the rate of compartment fragmentation $C\to 2C$ for each compartment is directly proportional to the number of molecules of $S$ in that compartment, with constant of proportionality $\kappa_F$. A compartment model of this form is determined by the chemistry $\chem_\Rate$, rate constants for each of the four reactions $0\leftrightarrows C\leftrightarrows 2C$, and a probability distribution $\mu$ on the state space for $\chem_\Rate$ which describes the state of new compartments after inflow reactions $0\to C$. We will record this information in a diagram of the form
\begin{equation}\label{eq:general-content-dependent-fragmentation}
\begin{tikzcd}
    \chem_\Rate & &
    0\arrow[yshift=.5ex]{r}{\kappa_I}& C\arrow[yshift=-.5ex]{l}{\kappa_E} \arrow[yshift=.5ex]{r}{\kappa_FS_C}& 2C\arrow[yshift=-.5ex]{l}{\kappa_C} & &
    \mu,
\end{tikzcd}
\end{equation}
where the new notation $S_C$ is used to indicate that the fragmentation rate depends on the species $S$. Strictly speaking, in addition to the components described above, we also need to specify a probability distribution $\psi$ for each possible state of the parent compartment. This distribution describes how the molecules of the parent are allocated across the daughter compartments, subject to the constraint that fragmentation $C \to 2C$ preserves total mass. However, the specific choice of $\psi$ will only play a role in a few of our examples, so we suppress it in the notation.

\begin{remark}
    The usual convention in the study of reaction networks is to write rate constants over reaction arrows, but $\kappa_FS_C$ is not a rate constant, which may seem strange. One way to think about the rate constants we have been writing is that any given compartment exits the system with rate $\kappa_E$, any given pair of compartments coagulates with rate $\kappa_C$, etc.. Viewed in this light, the new notation makes more sense, since if $S_C$ was the number of $S$ in compartment $C$, then $\kappa_FS_C$ would be exactly the rate at which compartment $C$ is fragmenting, consistent with the old notation.\hfill$\triangle$
\end{remark}

We now describe the model more precisely. Let $d$ be the number of species in the  chemistry $\chem_\Rate$, and for each $x \in \ZZ_{\ge 0}^d$, let $S(x)$ denote the number of molecules of species $S$ in state $x$. In other words, $S(x)$ is the coordinate of $x$ corresponding to the species $S$; for example, if $S$ is the first species, then $S(x) = x_1$. For a reaction $\nu \to \nu'$ in $\chem_\Rate$, let $\lambda_{\nu \to \nu'}(x)$ denote its rate when the compartment is in state $x$, so that the generator $\mathcal{A}$ of $\chem_\Rate$ is given by \eqref{eq:chem_generator}.

Define the state space
\[
\CoarseStateSpace := \left\{ n : \ZZ_{\ge 0}^d \to \ZZ_{\ge 0} \,\middle|\, \text{$n$ has finite support} \right\},
\]
where $n_x:=n(x)$ gives the number of compartments in internal state $x$. That is, a state $n \in \CoarseStateSpace$ records the number of compartments in each possible internal state, with only finitely many nonzero entries. 

For $n \in \CoarseStateSpace$, define the total number of compartments by
\begin{align}
    \label{eq:compartmenttotal}
C(n) := \sum_{x \in \ZZ_{\ge 0}^d} n_x,
\end{align}
and the total number of $S$ molecules across all compartments by
\[
S(n) := \sum_{x \in \ZZ_{\ge 0}^d} S(x) \, n_x.
\]

For each $x, y \in \ZZ_{\ge 0}^d$, let $\psi(x, y)$ denote the probability that, when a compartment in state $x$ fragments, one of the resulting daughter compartments is in state $y$, and the other is in state $x - y$. (Note that the parent compartment is removed during fragmentation.) For example, in \cite{Anderson_Howells_2023}, we used $\psi(x, y) = 2^{-(x_1 + \cdots + x_d)} \binom{x}{y}$, and in Remark~\ref{rmk:DZ contentent-dependent model}, we take $y \mapsto \psi(x, y)$ to be uniform over all valid daughter pairs. In general, we require that $\psi(x, y) = 0$ whenever $y_i > x_i$ for some $i$, and that $y \mapsto \psi(x, y)$ is a probability distribution for each $x$.

We now define the Markov chain $N$ with state space $\CoarseStateSpace$ and generator $\mathcal{L}$, acting on sufficiently nice functions $V : \CoarseStateSpace \to \RR$ by
\begin{align*}
    \mathcal LV(n)
    &:=\sum_{x\in\ZZ_{\ge0}^d}\Bigg[\left(\sum_{\nu\to\nu'}n_x\lambda_{\nu\to\nu'}(x)\big(V(n-e_x+e_{x+\nu'-\nu})-V(n)\big)\right)\tag{internal chemistry}\\
    &\qqquad+\kappa_I\mu(x)\big(V(n+e_x)-V(n)\big)+\kappa_En_x\big(V(n-e_x)-V(n)\big)\tag{inflow and exit}\\
    &\qqquad+\kappa_FS(x)n_x\left(\sum_{y\in\ZZ_{\ge0}^d}\psi(x,y)\big(V(n-e_x+e_y+e_{x-y})-V(n)\big)\right)\tag{fragmentation}\\
    &\qqquad+\kappa_C\binom{n_x}2\big(V(n-2e_x+e_{2x})-V(n)\big)\tag{coagulation (same state)}\\
    &\qqquad+\mathop{\sum_{y\in\ZZ_{\ge0}^d}}_{y\ne x}\kappa_C\frac{n_xn_y}2\big(V(n-e_x-e_y+e_{x+y})-V(n)\big)\Bigg],\tag{coagulation (distinct states)}
\end{align*}
where we have labeled the transition types that lead to each term. As in \cite{Anderson_Howells_2023} we will refer to $N$ as the \emph{coarse-grained model associated to \eqref{eq:general-content-dependent-fragmentation}}\footnote{To elaborate: The previous paper \cite{Anderson_Howells_2023} studied compartment models via two different Markov chains describing the dynamics of the model, namely, a ``simulation representation" which was constructed from simpler Markov chains in a way that made it particular amenable to simulation, and a ``coarse-grained model" which was defined as a particular projection of the simulation representation. In this paper, we instead define a single Markov chain via its generator. The Markov chain we have defined is precisely the analog of the ``coarse-grained model" from the previous paper, and we have chosen to keep that name even though we are not constructing it as the projection of another model here.}. Note that any of the rate constants may be taken to be zero.

Throughout the paper, we will frequently assume the following condition:

\begin{condition}\label{con:lambda finite}
Let $\lambda$ denote the expectation under $\mu$ of the total molecular count of new compartments: $\lambda=\sum_{x\in\ZZ_{\ge0}^d}\mu(x)\sum_{j=1}^d x_j$. Assume $\mu$ is such that $\lambda<\infty$.
\end{condition}
This condition ensures that the expected total molecular count of a newly formed compartment is finite.

We will often illustrate our results and techniques using the following one-species model:
\begin{equation}\label{eq:specific-content-dependent-fragmentation}
\begin{tikzcd}
    0\arrow[yshift=.5ex]{r}{\kappa_b}& S\arrow[yshift=-.5ex]{l}{\kappa_d} & &
    0\arrow[yshift=.5ex]{r}{\kappa_I}& C\arrow[yshift=-.5ex]{l}{\kappa_E} \arrow[yshift=.5ex]{r}{\kappa_FS_C}& 2C\arrow[yshift=-.5ex]{l}{\kappa_C} & &
    \mu.
\end{tikzcd}
\end{equation}
The generator for this model is given by:
\begin{align*}
    \mathcal LV(n)
    &=\sum_{x=0}^\infty\Bigg[\kappa_bn_x\big(V(n-e_x+e_{x+1})-V(n)\big)+\kappa_dn_xx\big(V(n-e_x+e_{x-1})-V(n)\big)\\
    &\qqquad+\kappa_I\mu(x)\big(V(n+e_x)-V(n)\big)+\kappa_En_x\big(V(n-e_x)-V(n)\big)\\
    &\qqquad+\kappa_Fxn_x\left(\sum_{y=0}^\infty \psi(x,y)\big(V(n-e_x+e_y+e_{x-y})-V(n)\big)\right)\\
    &\qqquad+\kappa_C\binom{n_x}2\big(V(n-2e_x+e_{2x})-V(n)\big)\\
    &\qqquad+\mathop{\sum_{y=0}^\infty}_{y\ne x}\kappa_C\frac{n_xn_y}2\big(V(n-e_x-e_y+e_{x+y})-V(n)\big)\Bigg].
\end{align*}

The special case of this model where $\kappa_b = \kappa_d = 0$, so that the number of $S$ molecules in each compartment changes only via compartment events, was studied in \cite{Duso_Zechner_2020}.

\section{Non-Explosivity of Compartment Models}\label{sec:Explode}

\subsection{Non-explosive Chemistry}

In Theorem 3.3 of \cite{Anderson_Howells_2023}, we showed that when the fragmentation rate does not depend on compartment contents, compartmentalizing a CRN does not affect whether or not it explodes. That is,  $N$ (the reaction network within compartments model) is explosive iff $\chem_\Rate$ (the stochastic chemical model) is. We would like a similar result in the context of this paper, in which the fragmentation rate depends upon the content of a designated species. Unfortunately, things are more delicate here. Nevertheless, we are able to establish the following partial result:

\begin{theorem}\label{thm:frag-nonexplosive}
Let $N$ be the coarse-grained model associated to \eqref{eq:general-content-dependent-fragmentation}, and let $\mathcal A$ denote the generator of the associated chemistry $\chem_\Rate$. Suppose there exists $w\in\RR_{>0}^d$ and $c,d\in\RR_{>0}$ such that $\mathcal Af(x)\le cf(x)+d$ for every $x\in\ZZ_{\ge0}^d$, where $f:\ZZ_{\ge0}^d\to\RR$ is the function defined by $f(x):=w\cdot x$. Then $N$ is not explosive.
\end{theorem}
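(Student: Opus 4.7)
The plan is to construct an explicit linear Lyapunov function on $\CoarseStateSpace$ and invoke the standard non-explosion criterion from Appendix~\ref{sec:Lyapunov}, which asks for a norm-like $V \ge 0$ satisfying $\mathcal{L} V(n) \le c' V(n) + d'$ globally. Given the hypothesis on $f$, the natural candidate is
\[
V(n) \;:=\; \sum_{x \in \ZZ_{\ge 0}^d} (1 + f(x))\, n_x \;=\; C(n) + \sum_{x} f(x)\, n_x.
\]
Because $w \in \RR_{>0}^d$, the sublevel set $\{V \le K\}$ forces both $C(n) \le K$ and the support of $n$ into the finite set $\{x : w \cdot x \le K\}$, so $V$ is norm-like. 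Condition~\ref{con:lambda finite} ensures $\lambda_w := \sum_x f(x)\mu(x) \le \|w\|_\infty \lambda < \infty$.

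Next I would expand $\mathcal{L} V(n)$ term by term, exploiting linearity of $f$ in two crucial places. A fragmentation that replaces a compartment in state $x$ with daughters in states $y$ and $x-y$ changes $\sum_x f(x) n_x$ by $f(y) + f(x-y) - f(x) = 0$ and changes $C(n)$ by $+1$; symmetrically, a coagulation merging states $x$ and $y$ into $x+y$ preserves $\sum_x f(x) n_x$ and changes $C(n)$ by $-1$. Consequently the fragmentation contribution to $\mathcal{L} V(n)$ collapses to $\kappa_F \sum_x S(x)\, n_x$, while the two coagulation terms combine to $-\kappa_C \binom{C(n)}{2} \le 0$. Inflow contributes $\kappa_I(1 + \lambda_w)$, exit contributes $-\kappa_E V(n) \le 0$, and the internal chemistry (which preserves the compartment count) contributes $\sum_x n_x \mathcal{A} f(x) \le c \sum_x f(x) n_x + d\, C(n)$ by the hypothesis.

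To close the estimate I would use positivity of $w$ to bound $S(x) \le f(x)/w_S$, where $w_S > 0$ is the weight on the designated species; this converts the content-dependent fragmentation contribution into $(\kappa_F/w_S)\sum_x f(x) n_x$. Discarding the non-positive pieces yields
\[
\mathcal{L} V(n) \;\le\; \Bigl(c + \tfrac{\kappa_F}{w_S}\Bigr)\sum_{x} f(x)\, n_x + d\, C(n) + \kappa_I(1+\lambda_w) \;\le\; c' V(n) + d',
\]
with $c' := \max\bigl(c + \kappa_F/w_S,\ d\bigr)$ and $d' := \kappa_I(1+\lambda_w)$. The non-explosion criterion from Appendix~\ref{sec:Lyapunov} then yields the theorem.

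The central insight, rather than a computational obstacle, is recognizing that linearity of $f$ is precisely what makes the \emph{total mass} functional $\sum_x f(x) n_x$ invariant under the compartment-level rearrangements, so that the only new contribution from content-dependent fragmentation is a linear overhead through the rate factor $S(x)$, which is in turn controlled by $f$ itself because $w > 0$. Every infinite sum in $\mathcal{L} V$ is handled routinely: each $n \in \CoarseStateSpace$ has finite support, so the internal, exit, fragmentation, and coagulation sums are all finite, and the lone genuinely infinite sum (inflow against $\mu$) is bounded uniformly in $n$ by Condition~\ref{con:lambda finite}.
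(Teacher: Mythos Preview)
Your Lyapunov computation is correct and matches the paper's almost line for line: the paper uses the same $V(n)=C(n)+\sum_x (w\cdot x)\,n_x$, discards the $\kappa_E$ and $\kappa_C$ terms, bounds $S(x)\le w_S^{-1}f(x)$, and arrives at $\mathcal LV(n)\le (\kappa_F w_S^{-1}+c+d)V(n)+\text{const}$.

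However, there is a genuine gap. You invoke Condition~\ref{con:lambda finite} to control the inflow term via $\lambda_w\le\|w\|_\infty\lambda<\infty$, but the theorem as stated makes \emph{no} assumption on $\mu$; in particular it must hold when $\lambda=\infty$. (Compare Corollary~\ref{cor:frag-nonexplosive}, which explicitly advertises ``We do not assume $\mu$ has finite expectation here.'') When $\lambda=\infty$ your constant $d'=\kappa_I(1+\lambda_w)$ is infinite and the bound $\mathcal LV\le c'V+d'$ is vacuous, so the non-explosion criterion does not apply.

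The paper closes this gap with a separate argument: let $\tau_j$ be the time of the $j$th inflow event and set $N^{(j)}(t)=N(t)\mathbf 1_{t<\tau_j}$. One shows by induction on $j$ that each $N^{(j)}$ is non-explosive, the key point being that on $[\tau_{j-1},\tau_j)$ the process behaves like a copy of $N$ with $\kappa_I=0$ (hence $\lambda$ irrelevant) started from a fixed state, to which your Lyapunov argument applies directly. Since the $\tau_j$ are driven by an independent rate-$\kappa_I$ Poisson clock, for any $t,\varepsilon>0$ one can choose $j$ with $\mathbb P(\tau_j>t)\ge 1-\varepsilon$, and non-explosivity of $N^{(j)}$ then forces $\mathbb P(N\text{ explodes by }t)\le\varepsilon$. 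Your proof is complete once you add this truncation step (or alternatively restrict the theorem statement to assume Condition~\ref{con:lambda finite}).
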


\begin{remark}
    Note that the hypothesis of Theorem \ref{thm:frag-nonexplosive}, that there exists a linear function $f$ with $\mathcal Af(x)\le cf(x)+d$ for some $c$ and $d$, is stronger than assuming that $\chem_\Rate$ is not explosive. Indeed, by Theorem \ref{thm:lyapunov-nonexplosivity} the existence of a (not necessarily linear) function $f$ with $\mathcal Af(x)\le cf(x)+d$ is a sufficient condition for nonexplosivity.
\end{remark}

\begin{proof}[Proof of Theorem \ref{thm:frag-nonexplosive}]
Recall that Condition~\ref{con:lambda finite} assumes the expected molecular count of new compartments, denoted by $\lambda$, is finite. This assumption simplifies the analysis by ensuring that the inflow of mass into the system is controlled. Hence, in the first part of the proof we will assume that the condition holds (i.e., that $\lambda < \infty$). The second part of the proof consists of dropping that assumption and extending the result to the case $\lambda = \infty$.

Let $C(n)$ denote, as in \eqref{eq:compartmenttotal}, the total number of compartments when in state $n$, and let $V(n):=C(n)+\sum_{x\in\ZZ_{\ge0}^d} n_x (w\cdot x)$ be the total number of compartments plus a weighted sum of molecular counts across compartments. The assumption that every coordinate of $w$ is strictly positive means that $V\to\infty$ in the sense of Theorem \ref{thm:lyapunov-nonexplosivity}. Notice that neglecting the $\kappa_E$ and $\kappa_C$ terms, which are negative for our choice of $V$, and computing the other terms exactly gets us the upper bound
\begin{align*}
    \mathcal LV(n)
    &\le\sum_{x\in\ZZ_{\ge0}^d} \kappa_FS(x)n_x+\kappa_I\mu(x)(1+w\cdot x)+\sum_{\nu\to\nu'}n_x\lambda_{\nu\to\nu'}(x)\big(w\cdot(x+\nu'-\nu)-w\cdot x\big)\\
    &=\sum_{x\in\ZZ_{\ge0}^d} \kappa_FS(x)n_x+\kappa_I\mu(x)(1+w\cdot x)+n_x\mathcal Af(x)\\
    &\le\sum_{x\in\ZZ_{\ge0}^d} \kappa_FS(x)n_x+\kappa_I\mu(x)+\kappa_I\mu(x)(\max_j w_j)\left(\sum_jx_j\right)+n_xcf(x)+dn_x\\
    &\le (\kappa_Fw_S^{-1}+c+d)V(n)+\big(\kappa_I+\kappa_I(\max_j w_j)\lambda\big).
\end{align*}
It follows from Theorem \ref{thm:lyapunov-nonexplosivity} that $N$ is not explosive when $\lambda<\infty$.

To handle the case where $\lambda = \infty$, we construct a sequence of truncated processes $N^{(j)}$ that exclude  events after (and including) the $j$th inflow occurrence. We then show by induction---using the result established for the $\lambda < \infty$ case---that each $N^{(j)}$ is non-explosive, and use this to conclude that the full process $N$ is also non-explosive.
Let $\tau_0=0$ and for $j=1,2,3,\dots$, let $\tau_j$ denote the $j$th time that a compartment-level reaction of the form $0 \to C$ takes place. For $j=0,1,2,\dots$, let $N^{(j)}(t)=N(t)\II_{t<\tau_j}$. (Strictly speaking, $N^{(j)}$ is not a Markov chain, but it can be made one in the standard way by enlarging the state space to include the number of inflows that have occurred. We omit these details.)
We claim that $N^{(j)}$ is not explosive, for any $j$.

We proceed by induction on $j$. $N^{(0)}$ isn't explosive, since it's just a constant. Suppose that $N^{(j-1)}$ is not explosive. $N^{(j)}(t)=N^{(j-1)}(t)$ for $t < \tau_{j-1}$,
so $N^{(j)}$ cannot explode before time $\tau_{j-1}$, and $N^{(j)}(t)$ is constant for $t \ge \tau_j$,
so it remains only to consider what happens for $\tau_{j-1}\le t< \tau_j$.
At $\tau_{j-1}$, an inflow event takes place, yielding a value $N(\tau_{j-1})$ which is equal to $N^{(j-1)}(\tau_{j-1}-)$ plus the change due to the inflow event.  We will denote this state by $\hat n(\tau_{j-1})$.
Next, for $\tau_{j-1}\le t< \tau_j$, by the strong Markov property $N^{(j)}(t)$ has the same distribution as $N(t)$ with, $0\le t< \tau_1$, started from $\hat n(\tau_{j-1})$. 
To formally proceed, we construct a coupled Markov chain $\tilde{N}$ by removing all inflow transitions from $N$. Since inflow events do not occur in $\tilde{N}$, we have $\tilde{N}(t) = N(t)$ for all $0 \le t < \tau_1$, where $\tau_1$ is the time of the first inflow. Moreover, $\tilde{N}$ is non-explosive by the argument already established for the $\lambda < \infty$ case.

Now with the claim that each $N^{(j)}$ is non-explosive proven, we let $t,\varepsilon>0$ be arbitrary. We will show that the probability that $N$ explodes by time $t$ is at most $\varepsilon$. Pick $j$ large enough that $\tau_j>t$ with probability at least $1-\varepsilon$. Then since $N^{(j)}(s)=N(s)$ when $s<\tau_j$ and since $N^{(j)}$ is not explosive, it follows that $\{N\text{ explodes by time }t\}\subseteq\{\tau_j<t\}$, and hence the probability $N$ explodes by time $t$ is at most $\varepsilon$. Since $\varepsilon>0$ was arbitrary, $N$ explodes by time $t$ with probability zero. Since $t$ was arbitrary, $N$ is not explosive.
\end{proof}

We now illustrate Theorem \ref{thm:frag-nonexplosive} by applying it to the specific model with chemistry $0\leftrightarrows S$.

\begin{cor}\label{cor:frag-nonexplosive}
Let $N$ be the coarse-grained model associated with \eqref{eq:specific-content-dependent-fragmentation}:
\begin{equation*}
\begin{tikzcd}
    0\arrow[yshift=.5ex]{r}{\kappa_b}& S\arrow[yshift=-.5ex]{l}{\kappa_d} & &
    0\arrow[yshift=.5ex]{r}{\kappa_I}& C\arrow[yshift=-.5ex]{l}{\kappa_E} \arrow[yshift=.5ex]{r}{\kappa_FS_C}& 2C\arrow[yshift=-.5ex]{l}{\kappa_C} & &
    \mu.
\end{tikzcd}
\end{equation*}
For any choice of parameters, $N$ is not explosive. (We do not assume $\mu$ has finite expectation here.)
\begin{proof}
Let $f(x)=x$. Notice that
\begin{align*}
    \mathcal Af(x)=\kappa_b-\kappa_dx\le \kappa_b.
\end{align*}
Therefore, $N$ is not explosive by Theorem \ref{thm:frag-nonexplosive}.
\end{proof}
\end{cor}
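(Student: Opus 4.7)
The plan is to invoke Theorem \ref{thm:frag-nonexplosive}, which reduces the problem to exhibiting a linear function $f$ on the state space of the underlying one-species chemistry $0 \leftrightarrows S$ satisfying an affine generator bound. Since there is only one species, any strictly positive weight works; the natural choice is $w=1$, that is, $f(x) = x$.

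The generator computation is immediate from mass-action. The birth reaction $0 \to S$ fires at rate $\kappa_b$ and increments $f$ by $1$, while the death reaction $S \to 0$ fires at rate $\kappa_d x$ and decrements $f$ by $1$. Therefore,
\[
\mathcal A f(x) = \kappa_b - \kappa_d x \le \kappa_b
\]
for every $x \in \ZZ_{\ge 0}$. Choosing any $c > 0$ together with, say, $d = \kappa_b + 1$, yields $\mathcal A f(x) \le c f(x) + d$ for all $x$, so the hypothesis of Theorem \ref{thm:frag-nonexplosive} is satisfied and $N$ is non-explosive.

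The parenthetical remark that $\mu$ need not have finite expectation requires no extra argument on my part, since the truncation step (on the number of inflow events) in the proof of Theorem \ref{thm:frag-nonexplosive} already removes that assumption. There is no substantive obstacle here: the corollary is a direct specialization of the main theorem. The only minor point worth flagging is that Theorem \ref{thm:frag-nonexplosive} demands $c$ to be strictly positive, so one should not be tempted to set $c = 0$ despite the bound on $\mathcal A f$ being independent of $f(x)$.
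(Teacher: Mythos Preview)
Your proof is correct and follows the same approach as the paper: both take $f(x)=x$, compute $\mathcal A f(x)=\kappa_b-\kappa_d x\le\kappa_b$, and invoke Theorem~\ref{thm:frag-nonexplosive}. Your version is slightly more explicit in choosing $c>0$ and $d=\kappa_b+1$ to meet the strict positivity requirement, and in noting that the $\lambda=\infty$ case is already handled inside the proof of Theorem~\ref{thm:frag-nonexplosive}; the paper leaves both points implicit.
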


Notice that Corollary \ref{cor:frag-nonexplosive} applies even in the case where all of the growth parameters ($\kappa_b$, $\kappa_I$, and $\kappa_F$) are positive and all of the decay parameters ($\kappa_d$, $\kappa_E$, and $\kappa_C$) are zero, even though the model is transient. It is not obvious \textit{a priori} that the process should fail to explode in this case, since there is a feedback loop where the (global, across all compartments) rate of $0\to S$ increases with the number of compartments, and in turn the growth rate of the number of compartments increases with the total number of $S$.

One natural question is whether there exists a CRN $\chem_\Rate$ which is not explosive but for which there is no linear Lyapunov function satisfying the hypotheses of Theorem \ref{thm:lyapunov-nonexplosivity}. In that case, we would not be able to apply Theorem \ref{thm:frag-nonexplosive} to the compartment model with chemistry $\chem_\Rate$. It turns out that yes, such a CRN does exist; see Example \ref{ex:nonexplosive-without-linear-Lyapunov} below. Moreover, for the example given the corresponding compartment model is not explosive either. This observation suggests that the linearity assumption in Theorem \ref{thm:frag-nonexplosive} may be unnecessarily restrictive, and motivates the search for more general conditions under which non-explosivity can be guaranteed.

\begin{example}\label{ex:nonexplosive-without-linear-Lyapunov}
Consider the compartment model
\begin{equation}\label{eq:nonexplosive-without-linear-Lyapunov}
\begin{tikzcd}
    E\arrow{r}{1}& 2E&
    E+S\arrow{r}{1}& E+2S & &
    C \arrow{r}{S_C}& 2C
\end{tikzcd}
\end{equation}
\hfill $\triangle$
\end{example}

\begin{prop}\label{prop:nonexplosive-without-linear-Lyapunov}
The internal CRN in \eqref{eq:nonexplosive-without-linear-Lyapunov} is not explosive, but no linear Lyapunov function witnesses that it is not explosive. That is, for all $c_1,c_2\in\RR_{>0}$, if $f(e,s)=c_1e+c_2s$ and $\mathcal A$ denotes the generator of $\chem_\Rate$, then for all $c_3,c_4\in\RR_{\ge0}$ there exists $e,s\in\ZZ_{\ge0}$ such that $\mathcal A f(e,s)>c_3 f(e,s)+c_4$.

Moreover, the coarse-grained model $N$ associated to \eqref{eq:nonexplosive-without-linear-Lyapunov} is also not explosive.
\begin{proof}
    To see that the internal CRN is not explosive, let $g(e,s)=e+\ln(s+1)$. Then
    \begin{align*}
    \mathcal A g(e,s)
    &=e(g(e+1,s)-g(e,s))+es(g(e,s+1)-g(e,s))\\
    &=e+es\ln\left(\frac{s+2}{s+1}\right) =e+es\ln\left(1+\frac1{s+1}\right)\\
    &\le e+es\frac1{s+1} \le e+e \le 2g(e,s).
    \end{align*}
    So the claim follows from Theorem \ref{thm:lyapunov-nonexplosivity}.
    
    To see that $g(e,s)=e+\log(s+1)$ cannot be replaced with a linear Lyapunov function above, fix $c_1,c_2>0$ and $c_3,c_4\ge0$, and let $f(e,s)=c_1e+c_2s$. Notice that
    \[
    \mathcal A f(e,s)=c_1e+c_2es.
    \]
    If $s$ is chosen such that $h(s):=c_2s^2+c_1s-c_3c_1s-c_3c_2s-c_4>0$ and we set $e=s$, then $\mathcal A f(e,s)>c_3 f(e,s)+c_4$. But $c_2>0$, so $h$ is a quadratic function with positive leading term and hence we can always pick such an $s$.

    Lastly, we have to show that $N$ is not explosive. For $n\in\CoarseStateSpace$, let
    \begin{align*}
        C(n)&=\sum_{x\in\ZZ_{\ge0}^2} n_x, \qquad\qquad S(n)=\!\sum_{(e,s)\in\ZZ_{\ge0}^2} sn_{(e,s)}, \qquad\qquad
        E(n)=\!\sum_{(e,s)\in\ZZ_{\ge0}^2} en_{(e,s)}
    \end{align*}
    denote the total number of compartments, $S$ molecules, and enzymes, respectively, when in state $n$. Consider the function $V(n)=E(n)+\log(S(n)+C(n)+1)$. Then if $\mathcal L$ denotes the generator of $N$, we have
    \begin{align*}
        \mathcal LV(n)
        &=\sum_{x=(e,s)\in\ZZ_{\ge0}^2}\Bigg[
        n_xe\big(E(n)+1-E(n)\big)
        +n_xes\big(\log(S(n)+1+C(n)+1)-\log(S(n)+C(n)+1)\big)\\
        &\qqquad
        +sn_x\left(\sum_{y\in\ZZ_{\ge0}^2} \psi(x,y)\big(\log(S(n)+C(n)+1+1)-\log(S(n)+C(n)+1)\big)\right)\Bigg]\\
        &=E(n)+\big(\log(S(n)+C(n)+2)-\log(S(n)+C(n)+1)\big)\sum_{x=(e,s)\in\ZZ_{\ge0}^2}
        n_x(es+s)\\
        &\le E(n)+\big(\log(S(n)+C(n)+2)-\log(S(n)+C(n)+1)\big)\sum_{x=(e,s)\in\ZZ_{\ge0}^2}
        n_x(E(n)s+s)\\
        &=E(n)+\big(E(n)S(n)+S(n)\big)\big(\log(S(n)+C(n)+2)-\log(S(n)+C(n)+1)\big)\\
        &\le E(n)+\big(E(n)S(n)+S(n)\big)\frac1{S(n)+C(n)+1}\\
        &\le E(n)+E(n)+1\\
        &\le 2V(n)+1.
    \end{align*}
    So the remaining claim follows, as before, from Theorem \ref{thm:lyapunov-nonexplosivity}.
\end{proof}
\end{prop}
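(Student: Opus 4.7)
The proposition bundles three claims: non-explosivity of the internal CRN, inadequacy of linear Lyapunov functions, and non-explosivity of the coarse-grained model. For the first and third claims, my plan is to exhibit Lyapunov functions satisfying the hypotheses of Theorem \ref{thm:lyapunov-nonexplosivity}; the second claim is a direct computation exploiting a quadratic obstruction.

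For part 1, the obstacle is that $E+S \to E+2S$ has bilinear rate $es$, so for any linear $f(e,s) = c_1 e + c_2 s$ the generator will contain an unabsorbable $c_2 es$ term. I would therefore damp $s$ to logarithmic scale and try $g(e,s) = e + \log(s+1)$. The reaction $E \to 2E$ contributes $e$ to $\mathcal{A}g$, while $E+S \to E+2S$ contributes $es\bigl[\log(s+2)-\log(s+1)\bigr] \le es/(s+1) \le e$, giving $\mathcal{A}g(e,s) \le 2e \le 2g(e,s)$ and hence non-explosivity.

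For part 2, a direct computation with $f(e,s) = c_1 e + c_2 s$ yields $\mathcal{A}f(e,s) = c_1 e + c_2 es$. Specializing to the diagonal $e = s$ produces the quadratic $c_2 s^2 + c_1 s$, whereas $c_3 f(s,s) + c_4 = c_3(c_1+c_2)s + c_4$ is linear in $s$, so the desired inequality fails for all sufficiently large $s$ regardless of $c_3, c_4$.

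Part 3 is the main obstacle. Fragmentation at rate $s$ can create many compartments when $S(n)$ is large, and inside each compartment $E$ undergoes pure-birth autocatalysis that in turn accelerates $S$-production, so one must control two intertwined feedback loops simultaneously. I would lift the logarithmic trick to the compartment level and try $V(n) = E(n) + \log(S(n) + C(n) + 1)$, using the totals $E(n), S(n), C(n)$. The key structural observation is that both $E+S \to E+2S$ and the fragmentation $C \to 2C$ increment $S(n)+C(n)$ by exactly one while preserving $E(n)$, so they share a common logarithmic increment bounded by $1/(S(n)+C(n)+1)$. Their combined rate is $\sum_x n_x (es+s)$; after bounding the per-compartment $e$ by the total $E(n)$ this becomes at most $(E(n)+1)S(n)$, and dividing by $S(n)+C(n)+1$ yields at most $E(n)+1$. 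Since the remaining $E \to 2E$ contribution is exactly $E(n)$, one obtains $\mathcal{L}V(n) \le 2E(n)+1 \le 2V(n)+1$, and Theorem \ref{thm:lyapunov-nonexplosivity} closes the argument. The conceptual difficulty is choosing the scaling: the sublinear function of $S(n)+C(n)$ must be weak enough that fragmentation's $s$-rate collapses against its increment, yet strong enough to dominate the molecular and compartmental growth in $V$ itself, and $\log$ sits at exactly that critical scale.
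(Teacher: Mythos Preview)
Your proposal is correct and follows essentially the same approach as the paper: the same Lyapunov function $g(e,s)=e+\log(s+1)$ for the internal CRN, the same diagonal specialization $e=s$ to defeat any linear $f$, and the same lifted function $V(n)=E(n)+\log(S(n)+C(n)+1)$ for the compartment model, with the identical key step of bounding the per-compartment enzyme count by the global total $E(n)$ to arrive at $\mathcal LV(n)\le 2V(n)+1$.
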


\subsubsection{A Conjecture}

Example \ref{ex:nonexplosive-without-linear-Lyapunov} leads us to conjecture that the linear Lyapunov assumption in Theorem \ref{thm:frag-nonexplosive} can be relaxed:

\begin{conj}\label{conj:frag-nonexplosive}
Let $N$ be the coarse-grained model associated to \eqref{eq:general-content-dependent-fragmentation}. If the chemistry $\chem_\Rate$ is not explosive, then $N$ is not explosive.
\end{conj}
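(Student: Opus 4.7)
The plan is to mimic the proof of Theorem \ref{thm:frag-nonexplosive}, replacing the linear function $f(x)=w\cdot x$ by a suitable nonlinear Lyapunov function for $\chem_\Rate$. A natural first observation is that non-explosivity of $\chem_\Rate$ does not by itself furnish a function $f$ satisfying $\mathcal{A}f(x)\le cf(x)+d$; however, in every natural example one encounters (including Example \ref{ex:nonexplosive-without-linear-Lyapunov}) such an $f$ does exist. I would therefore first reduce to the case where a Lyapunov function $f:\ZZ_{\ge0}^d\to\RR_{\ge0}$ with $f\to\infty$ along every coordinate and $\mathcal{A}f\le cf+d$ is available for $\chem_\Rate$, sidestepping the question of whether a converse of Theorem \ref{thm:lyapunov-nonexplosivity} holds in the present discrete setting. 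If the conjecture is true only under this stronger form of non-explosivity, this reduction is the whole game; if not, separate work is needed.

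Next, inspired by the computation in Proposition \ref{prop:nonexplosive-without-linear-Lyapunov}, I would try a compartment-level candidate of the form $V(n)=C(n)+g\bigl(\sum_x n_x f(x)\bigr)$, with $g:\RR_{\ge0}\to\RR_{\ge0}$ increasing and concave. Writing $F(n):=\sum_x n_x f(x)$, the internal-chemistry contribution to $\mathcal{L}F$ is at most $cF(n)+dC(n)$, controlled after composition with $g$; exit contributes a non-positive term, and inflow is bounded by a constant under Condition \ref{con:lambda finite} on $\lambda$, which is removed at the end via the truncation-on-inflow-events argument from the second half of the proof of Theorem \ref{thm:frag-nonexplosive}. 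The fragmentation of a compartment in state $x$ produces a change $\sum_y \psi(x,y)\bigl(f(y)+f(x-y)-f(x)\bigr)$ in $F$, weighted by $\kappa_F S(x)n_x$; concavity of $g$ is meant to absorb this via the bound $g(a+h)-g(a)\le g'(a)h$. The fragmentation contribution to $\mathcal{L}C$ is $\kappa_F S(n)$, and to close the estimate one needs $S(n)$ dominated by $V(n)$; one can arrange this by replacing $f$ with $f+\pi_S$, where $\pi_S(x):=S(x)$, provided the generator bound survives the replacement (which itself is a mild regularity assumption on $\chem_\Rate$). Granting all this, Theorem \ref{thm:lyapunov-nonexplosivity} would deliver non-explosivity of $N$.

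The main obstacle is the joint choice of $f$ and $g$. For linear $f$ the quantity $f(y)+f(x-y)-f(x)$ vanishes identically and $g=\mathrm{id}$ recovers Theorem \ref{thm:frag-nonexplosive} exactly; for general $f$, however, the multiplier $\kappa_F S(x)\bigl[f(y)+f(x-y)-f(x)\bigr]$ can grow faster than any affine function of $f(x)$ itself, so the concavity of $g$ must tame a potentially unbounded coefficient. I expect that no single universal pair $(f,g)$ works for every non-explosive $\chem_\Rate$, and that the conjecture, if true in the stated generality, will require either a chemistry-adapted Lyapunov construction (as in the treatment of Example \ref{ex:nonexplosive-without-linear-Lyapunov}) or a genuinely different strategy---for instance, a coupling in which each real compartment is paired with an independent non-exploding copy of $\chem_\Rate$ on an enlarged state space, with $C(n)$ bounded by a Yule-type process driven by the total mass of the coupled copies. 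Ruling out, via either route, a compartment-mediated feedback explosion in which bursts of $S$ molecules trigger fragmentation cascades that in turn produce yet more $S$ is, in my view, the crux of the problem.
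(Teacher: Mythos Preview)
The statement you are attempting is labeled a \emph{conjecture} in the paper and is explicitly left open; there is no proof in the paper to compare your proposal against. Your submission is not a proof either: you outline a Lyapunov strategy, identify its main obstacle, and then speculate that either a chemistry-adapted construction or a coupling argument would be needed---without carrying either out.

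Two remarks on the substance of your sketch. First, your ``reduction'' to the case where $\chem_\Rate$ admits a Lyapunov function $f$ with $\mathcal{A}f\le cf+d$ is not a reduction at all: it replaces the hypothesis of the conjecture with a strictly stronger one (as you yourself concede), so even complete success along this line would prove only a weakening of the conjecture. Second, your diagnosis of the obstruction---the fragmentation multiplier $\kappa_F S(x)\bigl[f(y)+f(x-y)-f(x)\bigr]$ outpacing any affine control on $f$---is reasonable but differs in emphasis from the paper's own discussion. Immediately after stating the conjecture, the paper isolates \emph{inflow reactions} (those with empty source complex) as the essential difficulty: by Observation~\ref{obs}, every non-inflow reaction fires no faster across compartments than it would in a single aggregated copy of $\chem_\Rate$, whereas inflow rates scale with the number of compartments and can therefore participate in the $S\to\text{compartments}\to S$ feedback loop you allude to at the end. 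The paper then poses a concrete open problem---the model~\eqref{eq:potentially-explosive} with chemistry $0\to E$, $E+S\to E+2S$---as the decisive test case; your proposal does not engage with this specific model, and the candidate $V(n)=C(n)+g(F(n))$ you suggest would face precisely the inflow-driven blowup that the paper flags, since the $0\to E$ term contributes $C(n)$ to $\mathcal{L}F$ rather than a constant.
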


A closer look at the previous example sheds some light on the obstructions to proving Conjecture \ref{conj:frag-nonexplosive}. Specifically, let $N$ be the coarse-grained compartment model associated with Example \ref{ex:nonexplosive-without-linear-Lyapunov}, and let $\chem_\Rate$ denote its underlying chemistry. As in the proof of Proposition \ref{prop:nonexplosive-without-linear-Lyapunov}, let $E(n)$ and $S(n)$ denote the total numbers of enzyme and substrate molecules across all compartments when the system is in state $n\in\CoarseStateSpace$.

Now compare the total rate of the reaction $E\to2E$ across all compartments in $N$ with the rate of the same reaction in the uncompartmentalized model $\chem_\Rate$ when that model is in state $(E(n),S(n))$. These two rates coincide (both are equal to $E(n)$). By contrast, if we compare the total rate of $E+S\to E+2S$ in $N$ with the rate of this reaction in $\chem_\Rate$ under the same substitution, the rates are no longer identical; however, the total rate in $N$ is still bounded above by $S(n)E(n)$, which is the corresponding rate in $\chem_\Rate$.

These comparisons illustrate a general principle that, while implicit in the Lyapunov-function arguments throughout this paper and in \cite{Anderson_Howells_2023}, is worth stating explicitly:

\begin{obs}\label{obs}
Let $N$ be any coarse-grained compartment model with associated chemistry $\chem_\Rate$.  
If a reaction in $\chem_\Rate$ has at least one species in its source complex, then the total rate of that reaction across all compartments in $N$ is bounded above by the rate that reaction would have in the non-compartmental model evaluated at the state given by the total species counts of $N$.  
If the source complex is unimolecular, equality holds.  
For inflow reactions (those with an empty source complex), however, the situation is reversed: the inflow rate in $\chem_\Rate$ is a constant, whereas in $N$ it is that constant multiplied by the number of compartments.\footnote{In some sense, this is a modeling choice—we could instead have assumed the total inflow rate across compartments remains constant.  
Our convention follows \cite{Duso_Zechner_2020}, where (to use our notation) reaction propensities all take the form $\kappa g(x)n_x$ for some function $g$.  
If Conjecture \ref{conj:frag-nonexplosive} were to fail only in the presence of inflow reactions, that outcome would motivate reconsidering this particular modeling assumption.}
\end{obs}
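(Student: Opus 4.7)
The plan is to split the claim into the three cases it addresses---inflow reactions, unimolecular non-inflow reactions, and general non-inflow reactions---and verify each by direct computation from the mass-action formula \eqref{eq:Mass-action}. Throughout, fix a reaction $\nu \to \nu'$ of $\chem_\Rate$, a state $n \in \CoarseStateSpace$, and write $T_j(n) := \sum_{x \in \ZZ_{\ge0}^d} n_x\, x_j$ for the total count of species $j$ across all compartments, so that "the rate that reaction would have in the non-compartmental model evaluated at the state given by the total species counts of $N$" means $\kappa_{\nu \to \nu'} \prod_{j=1}^d \binom{T_j(n)}{\nu_j}$.

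First I would dispose of the inflow case. If $\nu = 0$, then \eqref{eq:Mass-action} gives $\lambda_{0 \to \nu'}(x) = \kappa_{0 \to \nu'}$ for every $x$, so the total rate across compartments in $N$ is $\sum_x n_x \kappa_{0 \to \nu'} = \kappa_{0 \to \nu'} C(n)$, while the rate in $\chem_\Rate$ at $T(n)$ is the single constant $\kappa_{0 \to \nu'}$. This is precisely the stated multiplication by the number of compartments.

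For reactions with at least one species in the source complex, the task reduces to proving
\[
\sum_{x \in \ZZ_{\ge0}^d} n_x \prod_{j=1}^d \binom{x_j}{\nu_j} \;\le\; \prod_{j=1}^d \binom{T_j(n)}{\nu_j},
\]
after which multiplying by $\kappa_{\nu \to \nu'}$ delivers the rate inequality. I would argue combinatorially. Label every molecule currently in the system by the compartment that contains it. The right-hand side then counts the number of tuples consisting of a choice of $\nu_j$ distinct molecules of species $j$ from the system-wide pool, independently across $j$. On the left-hand side, the term $n_x \prod_j \binom{x_j}{\nu_j}$ counts the same objects subject to the extra constraint that all chosen molecules lie in one designated compartment that happens to be in state $x$, and summing over $x$ counts all configurations in which \emph{some} single compartment supplies the entire source complex. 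Since every configuration enumerated on the left also appears on the right, the inequality follows.

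Finally, in the unimolecular case $\nu = e_k$, the combinatorial constraint "all chosen molecules lie in one compartment" is vacuous because only one molecule is chosen, and indeed both sides of the above inequality collapse to $\sum_x n_x x_k = T_k(n)$, giving equality. I do not anticipate any genuine obstacle: the statement is essentially the super-additivity of binomial-coefficient products under partitioning the molecular pool into compartments, and the unimolecular and inflow cases are immediate from \eqref{eq:Mass-action}.
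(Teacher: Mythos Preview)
Your argument is correct. The paper does not supply a proof of this observation at all: it is presented as a general principle ``implicit in the Lyapunov-function arguments throughout this paper,'' illustrated only by the preceding discussion of the rates of $E\to 2E$ and $E+S\to E+2S$ in Example~\ref{ex:nonexplosive-without-linear-Lyapunov}, and then simply stated. Your combinatorial reading of the mass-action product $\prod_j\binom{\cdot}{\nu_j}$ as counting selections of source-complex molecules, with the compartmentalized sum counting only those selections confined to a single compartment, is exactly the natural proof; the key point that no configuration is double-counted on the left (because $\nu\ne0$ forces at least one chosen molecule, which pins down a unique compartment) is handled correctly, and the unimolecular and inflow cases fall out as you describe.
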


Inflow reactions therefore accelerate as the number of compartments increases, whereas more complex reactions proceed fastest when all reactants are concentrated within a single compartment.  
In the setting of this paper, where the rate of compartment formation grows with the abundance of a species $S$, such inflows can participate in a feedback loop: an increase in $S$ leads to more compartments, which in turn amplifies the inflow (either directly through $0\to S$ or indirectly through inflow of another species that promotes $S$).  
No such loop can occur in the absence of inflow reactions.

In light of this, the most challenging case of Conjecture \ref{conj:frag-nonexplosive} appears to be when inflow reactions are present.  
Replacing the reaction $E\to2E$ in Proposition \ref{prop:nonexplosive-without-linear-Lyapunov} with $0\to E$ would provide stronger evidence for the conjecture, but we have not been able to establish that result.

\begin{openProblem}\label{OP:potentially-explosive}
Is the compartment model
\begin{equation}\label{eq:potentially-explosive}
\begin{tikzcd}
    0\arrow{r}{1}& E&
    E+S\arrow{r}{1}& E+2S & &
    C \arrow[yshift=.5ex]{r}{S_C}& 2C  \arrow[yshift=-.5ex]{l}{2}
\end{tikzcd}
\end{equation}
explosive?
\end{openProblem}

If Conjecture \ref{conj:frag-nonexplosive} holds, then the compartment model in \eqref{eq:potentially-explosive} should be non-explosive. It would be tempting to try and prove this using Lyapunov function techniques, but it turns out that we would need to go beyond the Lyapunov functions used in this paper. Indeed, almost all of the Lyapunov functions used for compartment models this paper and in \cite{Anderson_Howells_2023} have been functions of the total number of compartments and the total number of each species present.\footnote{In particular, there were four propositions in \cite{Anderson_Howells_2023} where positive recurrence or transience of a compartment model was verified using a Lyapunov function; all four were of that form. Meanwhile, this paper contains six theorems or propositions which use Lyapunov functions for compartment models, and all but two are also of that form (the two exceptions are Step 3 of the proof of Proposition \ref{prop:sometimes-explosive}, and Proposition \ref{prop:spec-frag-transient}).}
However, no Lyapunov function of this form can work for \eqref{eq:potentially-explosive}. We will show this by analyzing the following CRN model:\footnote{As we will see momentarily, in some sense $Y$ corresponds to the total number of compartments, and $E$ and $S$ correspond to the total number of $E$ and $S$ across all compartments. We use $Y$ instead of $C$ to reduce the risk that the CRN is mistaken for some sort of compartment model.}
\begin{equation}\label{eq:CRN-projection}
\begin{tikzcd}
    2Y\arrow{r}{2}&Y\arrow{r}{1}& Y+E&
    E+S\arrow{r}{1}& E+2S&
    S\arrow{r}{1}& S+Y
\end{tikzcd}
\end{equation}
In Proposition \ref{prop:CRN-explodes}, we will show that this CRN is explosive. But as Proposition \ref{prop:Lyapunov-arguemnt-fails} will demonstrate, any Lyapunov function argument of the form described above, if it was used to show that the compartment model in \eqref{eq:potentially-explosive} were not explosive, would also serve to show that the CRN \eqref{eq:CRN-projection} were not explosive, a contradiction. Specifically, we show that if $V$ were a Lyapunov function for nonexplosivity of the compartment model \eqref{eq:potentially-explosive} in the sense of Theorem \ref{thm:lyapunov-nonexplosivity}, such that $V$ is some function $f$ of the total numbers of $E$, $S$, and compartments, then $f$ would be a Lyapunov function in the sense of Theorem \ref{thm:lyapunov-nonexplosivity} for the CRN \eqref{eq:CRN-projection}.

\begin{prop}\label{prop:Lyapunov-arguemnt-fails}
Let $N$ denote the compartment model in \eqref{eq:potentially-explosive}, let $\CoarseStateSpace$ denote its state space, and for $n\in\CoarseStateSpace$ define
\begin{align*}
    C(n)&:=\sum_{x\in\ZZ_{\ge0}^2} n_x\\
    S(n)&:=\sum_{(e,s)\in\ZZ_{\ge0}^2} sn_{(e,s)}\\
    E(n)&:=\sum_{(e,s)\in\ZZ_{\ge0}^2} en_{(e,s)}.
\end{align*}
These represent, respectively, the total numbers of compartments, substrate molecules, and enzymes when the system is in state $n$. Let $\mathcal L$ and $\mathcal A$ denote the generators of $N$ and  of the CRN in \eqref{eq:CRN-projection}, respectively. Suppose that $f$ is some function $f:\ZZ^3_{\ge0}\to[0,\infty)$, and suppose $V:\CoarseStateSpace\to[0,\infty)$ is defined by $V(n):=f(E(n),S(n),C(n))$.
If $V$ has finite sublevel sets and there exists $c,d$ such that $\mathcal LV(n)\le cV(n)+d$ for all states $n$, then $f$ has finite sublevel sets and $\mathcal Af(e,s,y)\le cf(e,s,y)+d$ for all states $(e,s,y)$.
\end{prop}

\begin{proof}
    Suppose that $V$ has finite sublevel sets, and fix $c$ and $d$ such that
    \begin{align}\label{eq:bdd on generator before projection}
        \mathcal LV(n)\le cV(n)+d
    \end{align}
    for all $n\in\CoarseStateSpace$. Notice that the model from \eqref{eq:CRN-projection}, if started in a state with zero molecules of $Y$, will either eventually enter a state with at least one molecule of $Y$ (if the starting state contains at least one molecule of $S$) or will never undergo any transitions at all (if the state does not contain any $S$ molecules). Moreover, once there is at least one molecule of $Y$ there will always be at least one. Accordingly, to understand whether that model explodes it suffices to analyze the states with at least one molecule of $Y$. Define an auxiliary function $g:\ZZ_{\ge 0}^2\times\ZZ_{>0}\to\CoarseStateSpace$ such that $g(e,s,y)$ is the state $n$ with $y$ compartments, one of which has $e$ molecules of $E$ and $s$ of $S$, and the rest of which are empty. In other words, $g(e,s,y)$ is the function
    \begin{align*}
        g(e,s,y):=(e',s')\mapsto\begin{cases}y-1 & (e',s')=(0,0)\\ 1 & (e',s')=(e,s)\\0&\text{else}\end{cases}.
    \end{align*}
    Then for $(e,s,y)\in\ZZ_{\ge0}^2\times \ZZ_{>0}$, we see that evaluating $\mathcal LV$ at the point $g(e,s,y)$ yields
    \begin{align*}
        \mathcal LV(g(e,s,y))&=y(y-1)\Big(V(g(e,s,y-1))-V(g(e,s,y))\Big)+s\Big(V(g(e,s,y+1))-V(g(e,s,y))\Big)\\
        &\qquad{} + y\Big(V(g(e+1,s,y))-V(g(e,s,y))\Big)+es\Big(V(g(e,s+1,y))-V(g(e,s,y))\Big)\\
        &=y(y-1)\Big(f(e,s,y-1)-f(e,s,y)\Big)+s\Big(f(e,s,y+1)-f(e,s,y)\Big)\\
        &\qquad{} + y\Big(f(e+1,s,y)-f(e,s,y)\Big)+es\Big(f(e,s+1,y)-f(e,s,y)\Big)\\
        &=\mathcal Af(e,s,y),
    \end{align*}
    where as always we take the convention in Lyapunov function calculations that a transition rate of zero times an undefined difference is zero. Plugging this equality into the inequality \eqref{eq:bdd on generator before projection} yields
    \begin{align*}
        \mathcal Af(e,s,y)
        &\le cV(g(e,s,y))+d\\
        &=cf(e,s,y)+d.
    \end{align*}
    Moreover, we know that $f$ has finite sublevel sets because taking the preimage of its sublevel sets under the injection $g$ we end up with (subsets of) sublevel sets of $V$, which are finite by assumption.
\end{proof}

The previous proposition demonstrates that the explosivity of the compartment model \eqref{eq:potentially-explosive} is closely tied to that of the CRN \eqref{eq:CRN-projection}. Since Conjecture \ref{conj:frag-nonexplosive} would imply that the compartment model is not explosive, we would now like to show that the CRN is not explosive. Alas, this is false, as the next result demonstrates.\footnote{In the original preprint of this paper which was posted to \url{arXiv.org}, the explosivity of the CRN \eqref{eq:CRN-projection} was left as an open problem. The authors are grateful to Lucie Laurence for suggesting that the CRN should be explosive. She outlined a proof via coupling argument and while we did not ultimately take that approach, the proposed coupling informed our choice of Lyapunov function here.}
\begin{prop}\label{prop:CRN-explodes}
    The CRN \eqref{eq:CRN-projection}, repeated below for convenience, is explosive.
    \begin{equation*}
    \begin{tikzcd}
        2Y\arrow{r}{2}&Y\arrow{r}{1}& Y+E&
        E+S\arrow{r}{1}& E+2S&
        S\arrow{r}{1}& S+Y
    \end{tikzcd}
    \end{equation*}
\end{prop}

\begin{proof}
Let $g(e,s,y)=(\log(e+2))^{-1}+(\log(s+2))^{-1}+(y+1)^{-1}$, where $\log$ denote the natural logarithm. Notice that if $\mathcal A$ denotes the generator of the CRN in question, then
\begin{align}\nonumber
    \mathcal Ag(e,s,y)
    &=y(y-1)\left(\frac1{y}-\frac1{y+1}\right)+s\left(\frac1{y+2}-\frac1{y+1}\right)+y\left(\frac1{\log(e+3)}-\frac1{\log(e+2)}\right)+es\left(\frac1{\log(s+3)}-\frac1{\log(s+2)}\right)\\\nonumber
    &=\frac{y-1}{y+1}+\frac{-s}{(y+1)(y+2)}+y\left(\frac{\log((e+2)/(e+3))}{\log(e+3)\log(e+2)}\right)+es\left(\frac{\log((s+2)/(s+3))}{\log(s+3)\log(s+2)}\right)\\\nonumber
    &=\frac{y-1}{y+1}+\frac{-s}{(y+1)(y+2)}+y\left(\frac{\log(1-(e+3)^{-1})}{\log(e+3)\log(e+2)}\right)+es\left(\frac{\log(1-(s+3)^{-1})}{\log(s+3)\log(s+2)}\right)\\\nonumber
    &\le \frac{y-1}{y+1}-\frac{s}{(y+1)(y+2)}-\frac{y}{(e+3)\log(e+3)\log(e+2)}-\frac{es}{(s+3)\log(s+3)\log(s+2)}\\\nonumber
    &\le \frac{y-1}{y+1}-\frac{s}{(y+1)(y+2)}-\frac{y}{(e+3)\log(e+3)\log(e+2)}-\frac{e}{4\log(s+3)\log(s+2)}\\\label{eq:proj-explosive-generator-inequality}
    &\le 1-\frac{s}{(y+2)^2}-\frac{y}{(e+3)\log(e+3)\log(e+2)}-\frac{e}{4(\log(s+3))^2},
\end{align}
where the first inequality is the fact that $\log(1+t)\le t$ for $-1<t<0$, and the second inequality is the fact that $s/(s+3)\ge 1/4$ for $s\ge 1$ (because the number of $S$ cannot decrease, we may as well leave the states with zero $S$ out from the state space entirely, especially since the model is boring in those states).

Suppose first that $e$ is large enough that $\exp(\sqrt{e/8})-3>2\big(2(e+3)\log(e+3)\log(e+2)+2\big)^2$; this inequality holds for all large enough $e$ because the right side grows slower than $e^3$ and the left side grows faster than any polynomial. If $s\le \exp(\sqrt{e/8})-3$, then $8(\log(s+3))^2\le e$ and so the fourth term in \eqref{eq:proj-explosive-generator-inequality} is at most $-2$ and so $\mathcal Ag(e,s,y)\le -1$. Similarly, if $2(e+3)\log(e+3)\log(e+2)\le y$, then the third term in \eqref{eq:proj-explosive-generator-inequality} is at most $-2$ and so $\mathcal Ag(e,s,y)\le -1$. Lastly, if neither $s\le \exp(\sqrt{e/8})-3$ nor $2(e+3)\log(e+3)\log(e+2)\le y$, then we have
\[
s>\exp\left(\sqrt{e/8}\right)-3
>2\big(2(e+3)\log(e+3)\log(e+2)+2\big)^2
>2(y+2)^2.
\]
It follows that in this case the second term in \eqref{eq:proj-explosive-generator-inequality} is at most $-2$ and so $\mathcal Ag(e,s,y)\le -1$.

By above, all states with $e$ sufficiently large have $\mathcal Ag(e,s,y)\le -1$. For each of the finitely many remaining $e$, notice that the third term in \eqref{eq:proj-explosive-generator-inequality} is approaching $-\infty$ as $y\to\infty$, uniformly in $s$. So it follows that for the finitely many remaining $e$, we have $\mathcal Ag(e,s,y)\le -1$ for all but finitely many values of $y$. But similarly, for each remaining value of $y$ the second term in \eqref{eq:proj-explosive-generator-inequality} is approaching $-\infty$ as $s\to\infty$. All told, we conclude that $\mathcal Ag(e,s,y)\le-1$ outside some finite set. Explosivity now follows from Corollary \ref{cor:lyapunov-explosive}.
\end{proof}

Putting this all together, we see that our existing Lyapunov techniques do not suffice to prove Conjecture \ref{conj:frag-nonexplosive}. In particular, even if the compartment model from Open Problem \ref{OP:potentially-explosive} is not explosive, as must be the case if Conjecture \ref{conj:frag-nonexplosive} holds, it will not be possible to prove this by using a Lyapunov function which is a function only of $C(n)$, $S(n)$, and $E(n)$, the total numbers of compartments, $S$ molecules, and enzymes. Instead, any proof would have to make use of how the species are distributed across compartments.

\subsection{Explosive Chemistry}

It is natural to also ask about the converse of Conjecture \ref{conj:frag-nonexplosive}: if $\chem_{\Rate}$ is explosive, must the associated compartment model $N$ also be explosive? Somewhat surprisingly, to us at least, the answer is no. Indeed, the chemistry in the following example is explosive for any positive choice of rate constants when started in any state with at least one enzyme, but explosivity for the compartment model will depend on how the species are distributed after compartment fragmentation events.

\begin{example}\label{ex:sometimes-explosive}
Consider the following model
\begin{equation}\label{eq:sometimes-explosive}
\begin{tikzcd}
    0\arrow{r}{1} &S&E+2S\arrow{r}{2\alpha}& E+3S & & &
    C\arrow{r}{S_C}& 2C
\end{tikzcd}
\end{equation}

For $p\in(0,1)$, let $\psi_p:\ZZ^2_{\ge0}\times \ZZ^2_{\ge0} \to [0,1]$ be any function with the following properties:
\begin{enumerate}
    \item[(i)] $\psi_p(x,y)=0$ if $y_i>x_i$ for some $i$.

    \item[(ii)] $y\mapsto \psi_p(x,y)$ is a probability distribution for each $x\in\ZZ^2_{\ge0}$.

    \item[(iii)] For each $e\geq 2$, we have $r(e)<1$, where
    \[
    r(e):=\sup_s\left[\sum_{t=0}^s \psi_p((e,s),(e,t))+ \sum_{t=0}^s \psi_p((e,s),(0,t)) \right]
    \]
    is the supremum of the probability that, after a compartment with $e$ enzymes undergoes a fragmentation, all enzymes remain in the same compartment.

    \item[(iv)] For each $s\in\ZZ_{\ge0}$, the map $t\mapsto\psi_p((1,s),(1,t))+\psi_p((1,s),(0,s-t))$ is the probability mass function of a binomial random variable with parameters $s$ and $p$. \hfill $\triangle$
\end{enumerate}
\end{example}

In other words, $\psi_p$ is a distribution for daughter compartments after fragmentation with the property that enzymes disperse with positive probability, and once there is one enzyme left the remaining $S$ stay with that enzyme independently with probability $p$. It turns out that when the model defined above is given the fragmentation distribution $\psi_p$, whether it explodes depends on both $p$ and $\alpha$, even though the underlying chemistry is explosive for every $\alpha>0$. In particular, the chemistry explodes when started in any state with at least one enzyme, and yet the compartment model is not explosive for certain choices of $p$ and $\alpha$ even when there are compartments with at least one enzyme for all time.

\begin{prop}\label{prop:sometimes-explosive}
Let $N$ be the coarse-grained model associated with \eqref{eq:sometimes-explosive}, equipped with fragmentation distribution $\psi_p$ for $p\in(0,1)$ and $\alpha>0$. 
Then $N$ is explosive if $p>e^{-\alpha}$ and non-explosive if $p<e^{-\alpha}$.
\end{prop}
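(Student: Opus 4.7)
The plan is to reduce both directions to an analysis of the substrate count $s^*(t)$ in the unique enzyme-containing compartment, starting from a single compartment in state $(1,s_0)$. (For general initial conditions with $E_0\ge 2$ enzymes, condition~(iii) guarantees the enzymes separate into distinct compartments after finitely many fragmentations; for the explosion direction it suffices to exhibit one explosive initial condition, and for the non-explosion direction the same argument applied to each resulting lineage extends the result.) Because the chemistry conserves enzymes and property~(iv) forces a binomial split of substrates in every $1$-enzyme compartment, $s^*$ is itself a CTMC on $\ZZ_{\ge 0}$ with $s^*\to s^*+1$ at rate $1+\alpha s^*(s^*-1)$ and $s^*\to T$ at rate $s^*$ with $T\sim\mathrm{Bin}(s^*,p)$. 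Using the deterministic approximation $s(t)\approx s_0/(1-\alpha s_0 t)$ valid for large $s_0$, a direct calculation yields $s^*(\tau)/s_0\stackrel{d}{=}1/U^\alpha$ for $U\sim\mathrm{Unif}(0,1)$ at the first fragmentation time $\tau$; combined with the concentration $T\approx p\,s^*(\tau)$, this gives the drift identity
\[
\lim_{s_{n-1}\to\infty} E[\log(s_n/s_{n-1})\mid s_{n-1}] = \log p + \alpha
\]
for the chain $(s_n)$ embedded at successive fragmentations of the enzyme compartment, whose sign is exactly the threshold in the proposition.

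For the explosion case $p>e^{-\alpha}$, I would apply a supermartingale criterion for transience to $(s_n)$ using $\phi(s)=(s+1)^{-\delta}$ for small $\delta>0$. The drift identity together with a moment estimate gives $E[\phi(s_n)\mid s_{n-1}]/\phi(s_{n-1})\to p^{-\delta}/(1+\alpha\delta)<1$ as $s_{n-1}\to\infty$, for $\delta$ small enough that $\delta(\log p+\alpha)$ dominates the $O(\delta^2)$ corrections. A stopped-supermartingale argument then yields $P(s_n>M\ \forall n\mid s_0)\ge 1-(M/s_0)^\delta\to 1$ as $s_0\to\infty$; on this event an SLLN for $\log s_n$ (with asymptotic mean-increment $\log p+\alpha$ and bounded-variance increments from the $\alpha|\log U|$ term) forces $s_n$ to grow exponentially. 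Because the fragmentation rate $s^*(t)$ is at least $s_{n-1}$ throughout the interval between consecutive fragmentations, the inter-fragmentation time $\tau_n$ is stochastically dominated by $\mathrm{Exp}(s_{n-1})$, so $\sum_n\tau_n<\infty$ almost surely on the growth event; infinitely many fragmentations then occur in finite time, $C(N_t)\to\infty$, and $N$ explodes.

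For the non-explosion case $p<e^{-\alpha}$, I would first show $s^*$ is non-explosive on its own and then lift the result to $N$. Choose $\beta\in(0,1)$ so small that $\alpha\beta+p^\beta<1$; this is possible because at $\beta=0$ the quantity equals $1$ with derivative $\alpha+\log p<0$. Using $E[T^\beta]=(ps)^\beta+O(s^{\beta-1})$ and $(s+1)^\beta-s^\beta=\beta s^{\beta-1}+O(s^{\beta-2})$, the generator of $s^*$ acting on $V(s)=s^\beta$ satisfies
\[
\mathcal{A}^*V(s) = s^{\beta+1}(\alpha\beta+p^\beta-1)+O(s^\beta),
\]
which is bounded above on $\ZZ_{\ge 0}$, so Theorem~\ref{thm:lyapunov-nonexplosivity} yields non-explosivity of $s^*$. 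Since $s^*$ has only finitely many jumps in any finite window $[0,t]$, the enzyme compartment fragments only finitely often, each such fragmentation depositing into the non-enzyme pool a new compartment with at most $\sup_{u\le t}s^*(u)<\infty$ substrates a.s. The non-enzyme sub-process has per-compartment inflow $0\to S$ at rate $1$ and fragmentation at rate $s$ but no catalytic reaction, so setting $V^{NE}:=C_{NE}+S'$ we obtain $\mathcal{L}V^{NE}\le V^{NE}$ apart from the finite seeding from enzyme fragmentations; a second application of Theorem~\ref{thm:lyapunov-nonexplosivity} then gives non-explosivity of the non-enzyme sub-process, and hence of $N$.

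The hardest part is the non-explosion direction at the sharp threshold. A direct Lyapunov function of the form $V(n)=C(n)+S(n)+M(s^*)^\gamma$ on the full state space does not recover $p<e^{-\alpha}$: the $\alpha s^{*2}$ term in $\mathcal{L}S$ requires $\gamma\ge 1$ just to keep $\mathcal{L}V$ bounded in $s^*$, but at $\gamma=1$ the drift condition degenerates to $\alpha+p<1$, which is strictly stronger than $p<e^{-\alpha}$. The two-step decomposition above---first closing off the $s^*$ dynamics through a polynomial Lyapunov function tuned to the log-drift, then treating the non-enzyme sub-process separately as a CTMC seeded by finitely many bounded events---is what extracts the correct threshold, and constitutes the technically delicate portion of the argument.
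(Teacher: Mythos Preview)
Your strategy---reduce to the one-enzyme substrate process $s^*$, use $s^\beta$ with $\alpha\beta+p^\beta<1$ for non-explosion, and handle the non-enzyme pool separately---shares its core with the paper, which computes the identical bound $\mathcal A^*s^\beta\le s^{\beta+1}(\alpha\beta+p^\beta-1)+O(s^\beta)$ in its Step~2. The executions diverge in two places. For explosion, the paper stays in continuous time and applies Corollary~\ref{cor:lyapunov-explosive} to $S(N)$ via the single Lyapunov function $f(y)=1/\ln(y+2)$: a Hoeffding bound on the binomial jump plus three elementary limits give $\mathcal Af\le-1$ eventually, in one computation. Your embedded-chain route is longer, and the SLLN step is not free since the increments $\log(s_n/s_{n-1})$ are neither i.i.d.\ nor exactly centered; you would need a martingale-difference SLLN with uniform control of conditional moments on the transience event before you can sum the $\mathrm{Exp}(s_{n-1})$ bounds. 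For the one-enzyme non-explosion, the paper uses a combined Lyapunov function $V(n)=S(n)^\lambda+\log(\widehat S(n)+C(n))$ on the full state space and checks $\mathcal LV$ is bounded above directly, whereas your ``$s^*$ first, non-enzyme pool second'' decomposition is more modular but the clause ``apart from the finite seeding'' hides an iterative argument of the kind used in the $\lambda=\infty$ half of Theorem~\ref{thm:frag-nonexplosive}, because the seeded non-enzyme process is not a time-homogeneous Markov chain and Theorem~\ref{thm:lyapunov-nonexplosivity} does not apply to it as stated.

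The one place where your outline is genuinely incomplete is the parenthetical on $E_0\ge 2$ enzymes. Writing ``condition~(iii) guarantees the enzymes separate after finitely many fragmentations'' presupposes that the multi-enzyme compartment fragments enough times \emph{before} a putative explosion of $N$, which is precisely what has to be shown. The paper devotes its entire Step~4 to this circularity, arguing by contradiction that if $N$ exploded with all $k$ enzymes together then infinitely many transitions of the enzyme compartment must occur, and then (via the divergence $\sum 1/s=\infty$ and Borel--Cantelli) that infinitely many of those are fragmentations, forcing dispersion and contradicting the inductive hypothesis. Your decomposition in fact admits a somewhat cleaner route once you observe that the $k$-enzyme compartment a.s.\ fragments before its internal chemistry diverges (because $\sum_{s\ge s_0}s/(s+1+\alpha k s(s-1))=\infty$), so dispersion occurs after a $\mathrm{Geometric}(1-r(k))$ number of fragmentations at some a.s.\ finite time $T_d$, with the non-enzyme pool non-explosive on $[0,T_d]$ by your seeding argument and the induction applying after $T_d$; but this is the hardest part of the proof and needs to be written out rather than parenthesized.
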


\begin{proof}
For $n\in\CoarseStateSpace$, let
\begin{align*}
    C(n)&=\sum_{x\in\ZZ_{\ge0}^2} n_x\\
    S(n)&=\sum_{(e,s)\in\ZZ_{\ge1}\times\ZZ_{\ge0}} sn_{(e,s)}\\
    \widehat S(n)&=\sum_{s\in\ZZ_{\ge0}} s n_{(0,s)}
\end{align*}
denote the total number of compartments, the total number of $S$ which are in the same compartment as at least one enzyme, and the total number of $S$ whose compartment has zero enzymes, respectively, when in state $n$.

Notice that in general, the processes $S(N)$, $\widehat S(N)$, and $C(N)$ are not Markovian. However, in the special case where $N$ starts with exactly one enzyme (and hence has exactly one enzyme for all time), the process $S(N)$ (but still neither $\widehat S(N)$ nor $C(N)$) will be a Markov chain. Indeed, $S(N)$ will change for three reasons: an inflow $0\to S$ happens in the compartment with the enzyme, the reaction $E+2S\to E+3S$ fires, or the compartment with the enzyme splits. The rates of these reactions and the value of $S(N)$ after each depend only on $S(N)$, so $S(N)$ is a Markov chain. Our approach will be to work with this simpler chain to the extent possible.

Specifically, our strategy is as follows. First, we will use a Lyapunov function to show that the Markov chain $S(N)$ is explosive when $p>e^{-\alpha}$. From this we can conclude that $N$ is explosive in this parameter regime. Next, we will use another Lyapunov function to show that $S(N)$ is positive recurrent when $N$ is started with one enzyme and $p<e^{-\alpha}$. Because a projection of an explosive Markov chain can be positive recurrent, showing $S(N)$ is positive recurrent does not directly imply any of the desired results about $N$, but it will nevertheless be useful. Indeed, we will use the calculations from this step to show in the next step that when $p<e^{-\alpha}$, $N$ does not explode when started with one enzyme. The final step will be to show nonexplosivity of $N$ for general starting states using an inductive argument.

\vspace{.1in}

\noindent \textbf{Step 1 (Explosivity for $p>e^{-\alpha}$).}
In this step we establish that when the fragmentation parameter $p$ exceeds $e^{-\alpha}$, the Markov chain $S(N)$ (corresponding to the total number of $S$ molecules in the one–enzyme case) satisfies the hypotheses of Corollary~\ref{cor:lyapunov-explosive}.  
We construct a Lyapunov function $f(y)=1/\ln(y+3)$ and show that $\mathcal Af(x)\le -1$ for all sufficiently large $x$, which implies that $S(N)$ is explosive.  
Since an explosion of $S(N)$ entails infinitely many jumps of $N$ in finite time, $N$ itself must also be explosive in this parameter regime.

\medskip
To that end, fix $p>e^{-\alpha}$ and choose $\delta$ satisfying $p>\delta>e^{-\alpha}$.  
Let $B_1,B_2,\dots$ be i.i.d.\ Bernoulli($p$) random variables, so that $Y_x:=\sum_{k=1}^x B_k$ is binomial with parameters $(x,p)$.  
We begin by bounding $\mathbb P(f(Y_x)\ge f(\delta x))$.  Note that 
\[
    \mathbb P(f(Y_x)\ge f(\delta x))
    =\mathbb P\!\left(\sum_{k=1}^x B_k\le \delta x\right)
    =\mathbb P\!\left(\sum_{k=1}^x (B_k-p)\le x(\delta-p)\right),
\]
where the first equality is because $f$ is a decreasing function.
The $B_i$ are independent with mean $p$ and take values in $[0,1]$, so by Hoeffding’s inequality,
\[
    \mathbb P\!\left(\sum_{k=1}^x (B_k-p)\le x(\delta-p)\right)
    \le \exp(-2x(\delta-p)^2).
\]
But then
\begin{align}\nonumber
    \mathbb E[f(Y_x)]&=\mathbb E[f(Y_x)\mathbf 1_{\{f(Y_x)\ge f(\delta x)\}}]
        +\mathbb E[f(Y_x)\mathbf 1_{\{f(Y_x)<f(\delta x)\}}]\\\nonumber
    &\le \mathbb P(f(Y_x)\ge f(\delta x)) + f(\delta x)\,\mathbb P(f(Y_x)<f(\delta x))\\\label{eq:Hoeffding}
    &\le \exp(-2x(\delta-p)^2)+f(\delta x),
\end{align}
where the first inequality uses the fact that $f$ maps into the interval $[0,1]$.

Now let $\mathcal A$ denote the generator of the Markov chain $S(N)$ under the assumption that $N$ is started in a state with one enzyme total across all compartments. Then for $x=1,2,3,\dots$,
\begin{align}\nonumber
    \mathcal Af(x)
    &=(\alpha x(x-1)+1)(f(x+1)-f(x))+x\sum_{y=0}^x\binom xy p^y(1-p)^{x-y}(f(y)-f(x))\\\nonumber
    &=(\alpha x(x-1)+1)\frac{\ln(x+3)-\ln(x+4)}{\ln(x+3)\ln(x+4)}+x\EE[f(Y_x)]-\frac{x}{\ln(x+3)}\\\nonumber
    &\le \alpha x(x-1)\frac{\ln(x+3)-\ln(x+4)}{\ln(x+3)\ln(x+4)}+x\EE[f(Y_x)]-\frac{x}{\ln(x+3)}\\\nonumber
    &\le \frac{-\alpha x(x-1)}{(x+4)\ln(x+3)\ln(x+4)}+x\exp(-2x(\delta-p)^2)+\frac{x}{\ln(\delta x+3)}-\frac{x}{\ln(x+3)}\\\label{eq:explosive-limits}
    &=\frac{x}{\ln(x+3)\ln(x)}\left[\frac{-\alpha (x-1)\ln(x)}{(x+4)\ln(x+4)}+\ln(x+3)\ln(x)\exp(-2x(\delta-p)^2)+\frac{\ln(x)\ln(x+3)}{\ln(\delta x+3)}-\ln(x)\right]
\end{align}
where the second inequality comes from \eqref{eq:Hoeffding} plus the fact that $\log(x+3)-\log(x+4)\le-(x+4)^{-1}$. But one can straightforwardly compute the following limits:
\begin{align*}
    \lim_{x\to\infty}\frac{-\alpha (x-1)\ln(x)}{(x+4)\ln(x+4)}&=-\alpha\\
    \lim_{x\to\infty}\ln(x+3)\ln(x)\exp(-2x(\delta-p)^2)&=0\\
    \lim_{x\to\infty}\frac{\ln(x)\ln(x+3)}{\ln(\delta x+3)}-\ln(x)&=\ln(1/\delta).
\end{align*}
It follows that the expression inside the brackets in \eqref{eq:explosive-limits} is converging to the (negative, in light of our assumption that $1/\delta<e^\alpha$) quantity $-\alpha+\ln(1/\delta)$. In particular, the expression inside the brackets is uniformly negative for sufficiently large $x$; to be concrete, we can for instance conclude that
\begin{align*}
    \frac{-\alpha (x-1)\ln(x)}{(x+4)\ln(x+4)}+\ln(x+3)\ln(x)\exp(-2x(\delta-p)^2)+\frac{\ln(x)\ln(x+3)}{\ln(\delta x+3)}-\ln(x)
    &\le \frac{-\alpha +\ln(1/\delta)}2,
\end{align*}
and hence
\begin{align*}
    \mathcal Af(x)
    &\le\frac{x}{\ln(x+3)\ln(x)}\left[\frac{-\alpha +\ln(1/\delta)}2\right],
\end{align*}
for all large enough $x$. Since the quantity outside the brackets is converging to $\infty$ as $x\to\infty$ and the quantity inside is negative and does not depend on $x$, we have $\mathcal Af(x)\le -1$ for all large enough $x$. But the Markov chain $S(N)$ is irreducible and $f(y)\to0$ as $y\to\infty$, so it follows that the hypotheses of Corollary \ref{cor:lyapunov-explosive} are satisfied: one can take $K\subset \ZZ_{\ge0}$ to be any non-empty finite set containing all points $x$ such that $\mathcal Af(x)>-1$, $z$ any point in $K$, and $y$ sufficiently large. Hence $S(N)$ is explosive. But $S(N)$ is a projection of $N$, so $N$ is also explosive.

\vspace{.1in}

\noindent \textbf{Step 2 (Positive Recurrence of $S(N)$ for $p<e^{-\alpha}$ with one enzyme).} 
We now turn to the case $p<e^{-\alpha}$ and show that, when $N$ is started with a single enzyme, the projected process $S(N)$ is \emph{positive recurrent}. 
The approach mirrors Step 1 but with a different Lyapunov function: 
we take $g(x)=x^{\lambda}$ for a carefully chosen $\lambda\in(0,1)$ satisfying $\alpha\lambda + p^{\lambda} - 1 < 0$. 
Intuitively, this function penalizes large values of $S(N)$ while keeping the algebra manageable, allowing us to prove that the drift $\mathcal{A}g(x)$ is eventually negative and thus that $S(N)$ cannot diverge.

To verify that such a $\lambda$ exists, note that for any $\lambda\in(0,1)$,
\[
p^\lambda = e^{\lambda\log p}
< 1 + \lambda\log p + \tfrac{1}{2}\lambda^2(\log p)^2,
\]
and therefore
\[
\alpha\lambda + p^\lambda - 1
< \lambda\big(\alpha - \log(1/p) + \tfrac{1}{2}\lambda(\log p)^2\big).
\]
Since $\alpha < \log(1/p)$ by assumption, the right-hand side is negative for sufficiently small $\lambda>0$, so such a $\lambda$ always exists.
Now, let $g(x)=x^{\lambda}$.  Note that $g$ is concave and hence $\EE[g(Y_x)]\le g(\EE[Y_x])$, and because $g''(x)<0$ for $x>0$, the mean value theorem implies
\[
(x+1)^\lambda - x^\lambda \le \lambda x^{\lambda-1}.
\]
Applying these bounds yields, for $x=1,2,3,\dots,$
\begin{align}\nonumber
\mathcal Ag(x)
&=(\alpha x(x-1)+1)(g(x+1)-g(x))+x\sum_{y=0}^x\binom xy p^y(1-p)^{x-y}(g(y)-g(x))\\\nonumber
&=(\alpha x(x-1)+1)((x+1)^\lambda-x^\lambda)+x\EE[g(Y_x)]-x^{1+\lambda}\\\nonumber
&\le (\alpha x^2+1)((x+1)^\lambda-x^\lambda)+x\EE[g(Y_x)]-x^{1+\lambda}\\\nonumber
&\le (\alpha x^2+1)\lambda x^{\lambda-1}+x(xp)^\lambda - x^{1+\lambda}\\\label{eq:pos-rec-bound}
&=x^{1+\lambda}(\alpha\lambda +p^\lambda-1)+\lambda x^{\lambda-1}.
\end{align}
However, $\lambda$ was chosen precisely so that the coefficient of the leading term of this bound, $\alpha\lambda+p^\lambda-1$, would be negative. It follows that $\mathcal Ag(x)<-1$ for all sufficiently large $x$, and hence $S(N)$ is positive recurrent.

\vspace{.1in}

\noindent \textbf{Step 3 (Nonexplosivity for $p<e^{-\alpha}$ with one enzyme).}
We now extend the argument to show that when $p<e^{-\alpha}$, the full compartment model $N$
is \emph{nonexplosive} even when started with one enzyme.
The idea is to combine the Lyapunov function for $S(N)$ from Step 2
with additional control over the total substrate and compartment counts.
To this end, we introduce a composite Lyapunov function that penalizes growth in both
the enzyme-containing and enzyme-free compartments,
and we verify that its generator remains uniformly bounded above.

Specifically, let $\lambda \in (0,1)$ be such that $\alpha\lambda + p^{\lambda} - 1 < 0$ (such a $\lambda$ exist by the argument given in Step 2) and define $g(x) = x^\lambda$ as before.
Consider the function
\[
V(n) = g(S(n)) + \log(\widehat S(n) + C(n)),
\]
and let $\mathcal{L}$ denote the generator of $N$.
Using the bounds on $\mathcal{A} g$ derived in Step~2,
together with the following observations:
(i) because $\widehat S(n) + S(n)$ is unchanged by fragmentation, the most that $\widehat S(n)$ can increase by, after the compartment with the enzyme splits, is $S(n)$;
(ii) for any fixed $S(n) \ge 0$, the function
\[
x \mapsto \log(S(n)+1+x) - \log(x)
\]
is strictly decreasing for $x>0$ (since its derivative 
$\frac{1}{S(n)+1+x} - \frac{1}{x} = -\frac{S(n)+1}{x(S(n)+1+x)}$ is negative),
and hence it is maximized at the minimal admissible value of $x$;
and (iii) $\widehat S(n) + C(n) \ge 1$,
we obtain that for all $n$ with exactly one enzyme and $S(n) \ge 1$,
\begin{align*}
\mathcal LV(n)
&=(\alpha S(n)(S(n)-1)+1)(g(S(n)+1)-g(S(n)))+S(n)\sum_{y=0}^{S(n)}\binom{S(n)}y p^y(1-p)^{S(n)-y}(g(y)-g(S(n)))\\
&\qquad
+S(n)\sum_{y=0}^{S(n)}\binom{S(n)}y p^y(1-p)^{S(n)-y}(\log(\widehat S(n)+S(n)-y+C(n)+1)-\log(\widehat S(n)+C(n)))\\
&\qquad +(C(n)-1)\left(\log(\widehat S(n)+1+C(n))-\log(\widehat S(n)+C(n))\right)\\
&\qquad + \widehat S(n)\left(\log(\widehat S(n)+C(n)+1)-\log(\widehat S(n)+C(n))\right)\\
&\le (S(n))^{1+\lambda}(\alpha\lambda+p^\lambda-1)+\lambda (S(n))^{\lambda-1}+S(n)\big(\log(\widehat S(n)+S(n)+C(n)+1)-\log(\widehat S(n)+C(n))\big)\\
&\qquad+\frac{C(n)-1}{\widehat S(n)+C(n)}+\frac{\widehat S(n)}{\widehat S(n)+C(n)}\\
&\le (S(n))^{1+\lambda}(\alpha\lambda+p^\lambda-1)+\lambda (S(n))^{\lambda-1}+S(n)\log(S(n)+2)
+\frac{C(n)-1}{\widehat S(n)+C(n)}+\frac{\widehat S(n)}{\widehat S(n)+C(n)}\\
&\le (S(n))^{1+\lambda}(\alpha\lambda+p^\lambda-1)+\lambda (S(n))^{\lambda-1}+S(n)\log(S(n)+2)+2.
\end{align*}
Since $\lambda>0$, we know that $x^{1+\lambda}$ grows faster than $x\log(x+2)$. So the coefficient of the leading order term is $\alpha\lambda+p^{\lambda}-1$ just as in step 2, and this coefficient is negative just as in step 2. Moreover, this upper bound depends only on $S(n)$, not $\widehat S(n)$ or $C(n)$, and so we conclude that $\mathcal LV(n)\le -1$ for all $n$ with exactly one enzyme such that $S(n)$ is bigger than some threshold. For each of the finitely many remaining nonzero values of $S(n)$, the upper bound above is still uniform in $\widehat S(n)$ and $C(n)$. Moreover, the only reason that our argument doesn't work for $S(n)=0$ is the fact that $g'(0)$ is not defined, but that case is easily handled directly to get, for instance,
\[
\mathcal LV(n)
\le 1(g(1)-g(0))+\frac{C(n)-1}{\widehat S(n)+C(n)}+\frac{\widehat S(n)}{\widehat S(n)+C(n)}
\le 3.
\]
So all told we have a finite number of upper bounds collectively covering all cases, and we conclude that $\mathcal LV(n)$ is bounded above uniformly in $n$ over the set of states with exactly one enzyme. Since the set of such states is closed (in the Markov chain sense that the process $N$ cannot leave the set), we have enough to conclude by Theorem \ref{thm:lyapunov-nonexplosivity} that $N$ is not explosive when started with a single enzyme.

\vspace{.1in}

\noindent 

\textbf{Step 4 (Nonexplosivity for $p<e^{-\alpha}$ in general).} Finally, we extend the argument to general initial conditions.
Our goal is to show that $N$ is not explosive when started with any number of enzymes.
We begin by handling the trivial case where all compartments contain zero enzymes,
and then proceed by induction on the total enzyme count.
The inductive step relies on two key observations:
(i) compartments evolve independently, so a sum of nonexplosive subsystems remains nonexplosive,
and (ii) with probability one, a dispersion event eventually separates enzymes into multiple
compartments, after which the process can be written as a sum of lower-enzyme subsystems.

We begin with the base case, that initial number of enzymes is at most one. The case where the initial number of enzymes is precisely one is exactly the content of Step 3, so for the base case it just remains to argue that $N$ does not explode when started with zero enzymes. However, proving that $N$ does not explode when all compartments have zero enzymes is straightforward. Indeed, note that in this case $N$ is equivalent to \eqref{eq:specific-content-dependent-fragmentation} with coefficients $\kappa_b=\kappa_F=1$, $\kappa_d=\kappa_I=\kappa_E=\kappa_C=0$, and so by Corollary \ref{cor:frag-nonexplosive} we see that it is not explosive.

Now fix $p < e^{-\alpha}$ and $k>1$ with the property that for $i=0,1,\dots,k-1$, the system $N$ is not explosive when started with $i$ enzymes. We will show that $N$ does not explode when started with $k$ enzymes.

First, notice that the model given by \eqref{eq:sometimes-explosive} has the property that separate compartments do not interact in any manner. It follows that if $N_1$ and $N_2$ are independent copies of this model with initial states $n_1,n_2$ respectively, then $N_1+N_2$ is also an instance of \eqref{eq:sometimes-explosive} with initial state $n_1+n_2$. In particular, if the initial state $N(0)$ has the property that not all enzymes are in the same compartment, then $N$ can be expressed as the sum of two Markov chains which are both not explosive by the induction hypothesis, and hence $N$ is not explosive. Moreover, this means that if a \emph{dispersion} (which we here define as a fragmentation after which not all enzymes remain in the same compartment) happens, there cannot be an explosion after.

Going forward, then, we will assume that all $k$ enzymes are in the same compartment, and try to prove that an explosion cannot happen without a dispersion occurring along the way. Notice that we can use the assumption that all $k$ enzymes are in the same compartment to separate the transitions of $N$ into four types:

\begin{enumerate}
    \item[(i)] A new substrate $S$ is produced in the compartment with the enzymes,
    \item[(ii)] A new substrate $S$ is produced in the other compartments,
    \item[(iii)] The compartment with the enzymes fragments, or 
    \item[(iv)] There is a fragmentation from the other compartments
\end{enumerate}

Note that transitions of type (i) and (iii) are the only ones that affect $S(N)$. We will first prove that if the process $N$ explodes, then infinitely many transitions of type (i) or (iii) will happen. Then, we will prove that in either case, infinitely many transitions of type (iii) will happen, and from there that a dispersion happens, a contradiction.

Consider the event that $N$ explodes and nevertheless only finitely many transitions (i) and (iii) happen. Suppose towards a contradiction that this event has positive probability. It is the union over $m=0,1,2,\dots$ of the total number of transitions of type (i) and (iii) that happen before an explosion of $N$; this union is countable so we can fix a particular $m$ such that the event $E_m$, that exactly $m$ transitions total of type (i) and (iii) happen and then $N$ explodes, has positive probability. Let $\tau$ be the time of the $m$-th transition of type (i) or (iii). Then on the event $E_m$ the process $\widehat N$ consisting of all compartments other than the one with all the enzymes is a Markov chain from time $\tau$ onward.
Moreover, we know the Markov chain $\widehat N$ does not explode, and on the event $E_m$ the compartment with all the enzymes does not undergo any transitions after time $\tau$. Therefore, $N$ does not actually explode on the event $E_m$, which is a contradiction.

So now we know that if $N$ explodes with positive probability, then there must a.s.~be infinitely many transitions of type (i) or (iii).

Suppose towards a contradiction that with positive probability, $N$ explodes and infinitely many transitions of type (iii) happen. Then, the probability that the enzymes disperse in each given fragmentation of type (iii) is bounded below by $1-r(k)$, which is positive, so by the Borel--Cantelli lemma, dispersion occurs almost surely. But as discussed, if we restart $N$ after the time of the first dispersion then it can be expressed as the sum of two non-explosive Markov chains, so we again get a contradiction.

So now finally, suppose towards a contradiction simply that $N$ explodes with positive probability. From the above, we know that  infinitely many transitions of type (i) occur, and only finitely many transitions of type (iii) do. To get a contradiction from here, we will compare the rate of production of the substrate $S$ to the rate of production of new compartments, and we will use the Borel--Cantelli lemma to say that, actually, infinitely many transitions of type (iii) must happen.

Note that the rate of gaining a substrate $S$ (transitions of type (i)) when in state $n$ in the projected process $S(N)$ is given by
\begin{align*}
    \alpha S(n)(S(n)-1)k + 1,
\end{align*}
and that the rate of fragmentation for $S(N)$ (transitions of type (iii)) when in state $n$ is simply
\begin{align*}
     S(n).
\end{align*}
Thus, the probability of transitions of type (iii) happening when in state $n$ is:
\begin{align*}
   \dfrac{S(n)}{\alpha S(n) (S(n) - 1) k + 1 + S(n)} = \Theta \left(\dfrac{1}{S(n)}\right).
\end{align*}
By our previous two contradiction arguments, on the event where $N$ explodes there must be finitely many transitions of type (iii) and infinitely many of type (i). But transitions of type (iii) are the only ones which decrease $S(N)$ and transitions of type (i) are the only ones which increase it, and moreover they increase it by one. Therefore, on the event that $N$ explodes, $S(N)$ must take on every integer value between its starting value and infinity. But the harmonic series diverges, so by the Borel-Cantelli lemma we conclude that there will, with  probability  one, be infinitely many transitions of type (iii), a contradiction. Therefore, the original model $N$ is not explosive when started with $k$ enzymes. The claimed result now follows by induction.
\end{proof}

\begin{remark}\label{rmk:why-not-log}
By replacing the Lyapunov function $s^\lambda$ with the Lyapunov function
\[
g(s)=\begin{cases}
    \ln(s)              &  s\ge p\\
    \frac sp-1+\ln(p)   &  s<p
\end{cases},
\]
in step 2 above one can show, not just that $S(N)$ is positive recurrent when $N$ is started with one enzyme and $\alpha<\log(1/p)$, but also that $S(N)$ is positive recurrent for such starting states when $\alpha=\log(1/p)$. Moreover, this alternative proof has the benefit that one doesn't have to worry about picking $\lambda$ based on $\alpha$ and $p$. So one might be tempted to use the Lyapunov function $V(n)=g(S(n))+\log(\widehat S(n)+C(n))$, or more generally some Lyapunov function of the form $V(n)=g(S(n))+h(\widehat S(n),C(n))$, in step 3 of the proof above to prove nonexplosivity when $\alpha\le\log(1/p)$. However, no such Lyapunov function can work, even when $\alpha<\log(1/p)$, much less when equality holds. Indeed, one can check that the negative term in \eqref{eq:pos-rec-bound} which was previously on the order of $s^{1+\lambda}$ is now on the order of $s$, whereas the other term arising from the fragmentation will be of the form $S(n) \EE[h(\widehat S(n)+S(n)-Y,C(n)+1)-h(\widehat S(n),C(n))]$. Since $h$ must be growing in its first argument to be a Lyapunov function for nonexplosivity (which requires finite sublevel sets), one sees that the positive term is superlinear in $S(n)$ and thus swamps both the negative term and the desired upper bound of (a constant times) $V(n)$. So, while we are inclined to believe the following conjecture that $N$ is not explosive even when $\alpha=\log(1/p)$, it does not seem likely that this conjecture will be proven by a trivial modification of the proof above.\hfill $\triangle$
\end{remark}

\begin{conj}
Suppose $N$ is as in the proposition above, with parameters $\alpha=\log(1/p)$. Then $N$ is not explosive.
\end{conj}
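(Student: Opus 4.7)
The plan is to mirror the inductive structure of Step 4 of the proof of Proposition \ref{prop:sometimes-explosive}: the zero-enzyme base case is immediate from Corollary \ref{cor:frag-nonexplosive}, and the inductive step (which sorts transitions into types (i)-(iv), uses property (iii) of $\psi_p$ to guarantee dispersion conditional on infinitely many fragmentations of the enzyme compartment, and then invokes a Borel--Cantelli argument on the harmonic series to rule out the alternative) never uses the parameter regime. So the conjecture reduces to showing that $N$ is non-explosive when started with a single enzyme.

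For the one-enzyme case I would not try to build a single Lyapunov function on $\CoarseStateSpace$---Remark \ref{rmk:why-not-log} already explains why any ansatz of the form $V(n)=g(S(n))+h(\widehat S(n),C(n))$ is doomed at the critical parameter---and would instead exploit the fact that \eqref{eq:sometimes-explosive} has no coagulation, exit, or inflow of compartments, so that compartments never interact. Let $S_1(t)$ denote the substrate count of the unique enzyme-containing compartment; then $S_1$ is an autonomous continuous-time Markov chain on $\ZZ_{\ge 0}$ with jumps $s\mapsto s+1$ at rate $\alpha s(s-1)+1$ and $s\mapsto Y\sim\mathrm{Bin}(s,p)$ at rate $s$. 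By Remark \ref{rmk:why-not-log}, the log Lyapunov function shows $S_1$ is positive recurrent at $\alpha=\log(1/p)$, and in particular non-explosive; so only finitely many enzyme fragmentations occur in any bounded time interval almost surely.

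Next I would write the enzyme-free cloud at time $t$ as a superposition of (i) the subprocess descended from the initial enzyme-free compartments, and (ii) for each enzyme fragmentation in $[0,t]$, one subprocess started at its birth time from the enzyme-free daughter state $(0,s-Y)$. Because distinct enzyme-free compartments never interact, these subprocesses are independent, and each is an instance of the model \eqref{eq:specific-content-dependent-fragmentation} with $\kappa_b=\kappa_F=1$ and $\kappa_d=\kappa_I=\kappa_E=\kappa_C=0$, which is non-explosive by Corollary \ref{cor:frag-nonexplosive}. Almost surely only finitely many such subprocesses are born in any bounded time interval, so the cloud is a random finite superposition of non-explosive processes and hence non-explosive; combined with non-explosivity of $S_1$, this yields non-explosivity of $N$.

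The main obstacle is filling in the calculation promised by Remark \ref{rmk:why-not-log}. The delicate point is that at $\alpha=\log(1/p)$ the leading-order drift of $\mathcal A\ln(s)$ vanishes: the contribution $\alpha s(s-1)(\ln(s+1)-\ln(s))\sim\alpha s$ from internal chemistry cancels against $s\,\mathbb E[\ln(Y)-\ln(s)]\sim s\ln p$ from fragmentation, because $\alpha+\ln p=0$. One must then extract the subleading term by Taylor expanding $\ln(Y/(sp))$ to second order and using $\mathrm{Var}(Y)=sp(1-p)$; this produces a strictly negative limiting drift of $-\alpha-(1-p)/(2p)+o(1)$ as $s\to\infty$, which suffices for positive recurrence (and hence non-explosivity) of $S_1$ via the standard Foster criterion. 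The remainder of the proof is straightforward independence-of-compartments bookkeeping and invocations of Corollary \ref{cor:frag-nonexplosive}.
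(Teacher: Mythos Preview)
This statement is left as an open \emph{conjecture} in the paper; the authors give no proof and explicitly say (Remark~\ref{rmk:why-not-log}) that ``it does not seem likely that this conjecture will be proven by a trivial modification of the proof above.'' So there is no paper argument to compare against---your proposal, if it holds up, would resolve the conjecture.

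Your strategy looks correct, and the decisive observation is one the paper does not exploit. Because model~\eqref{eq:sometimes-explosive} has $\kappa_I=\kappa_E=\kappa_C=0$, distinct compartments never interact, so the one-enzyme process decomposes into the autonomous chain $S_1=S(N)$ governing the enzyme compartment, together with a family of enzyme-free lineages, each an instance of the model covered by Corollary~\ref{cor:frag-nonexplosive}, launched at stopping times determined by $S_1$. Non-explosivity of $S_1$ (already asserted in Remark~\ref{rmk:why-not-log}) bounds the number of lineages born by any fixed time $t$; non-explosivity of each lineage plus a countable-union argument then gives non-explosivity of $N$. This sidesteps precisely the obstruction the paper identifies for Lyapunov functions of the form $g(S(n))+h(\widehat S(n),C(n))$, and your observation that the Step~4 induction is parameter-free is correct.

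Two technical points to tighten. First, your limiting constant is slightly off: expanding $(\alpha s(s-1)+1)\ln(1+1/s)$ to second order contributes $\alpha s-\tfrac32\alpha+o(1)$, so at $\alpha=\log(1/p)$ the limit of $\mathcal A g(s)$ is $-\tfrac32\alpha-(1-p)/(2p)$ rather than $-\alpha-(1-p)/(2p)$; still strictly negative, so the conclusion stands after rescaling $g$. Second, the expansion $\mathbb E[\ln(Y/(sp))]\approx -\tfrac12\,\mathrm{Var}(Y)/(sp)^2$ is only formal: you must control the tail $\{|Y-sp|>\delta sp\}$ via a Hoeffding bound (as in Step~1 of Proposition~\ref{prop:sometimes-explosive}) and use the piecewise value $g(0)=-1+\ln p$ from the remark to absorb the event $\{Y=0\}$. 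Finally, the lineage decomposition implicitly passes to a labeled-compartment model (as the paper itself does in Step~4 when writing $N$ as a sum of chains); this has the same explosion time as the coarse-grained $N$, but the identification should be stated.
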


\section{Positive Recurrence of Compartment Models}\label{sec:Recurrence}

In the previous sections we characterized the possible behaviors of the process with respect to explosivity. In particular, we established general conditions ensuring that the system is non-explosive, and in certain specific models we identified parameter regimes leading to transience. We now turn to the complementary question: when is the process positive recurrent? As before, our analysis relies on the existence of a linear Lyapunov function for $\chem_\Rate$.  We have the following theorem.

\begin{theorem}\label{thm:gen-frag-pos-recurrent}
Let $N$ be the coarse-grained model associated to \eqref{eq:general-content-dependent-fragmentation}:
\begin{equation*}
\begin{tikzcd}
    \chem_\Rate & &
    0\arrow[yshift=.5ex]{r}{\kappa_I}& C\arrow[yshift=-.5ex]{l}{\kappa_E} \arrow[yshift=.5ex]{r}{\kappa_FS_C}& 2C\arrow[yshift=-.5ex]{l}{\kappa_C} & &
    \mu.
\end{tikzcd}
\end{equation*}
Let $\mathcal A$ denote the generator of the associated chemistry $\chem_\Rate$. Suppose there exists a linear function $f:\ZZ_{\ge0}^d\to\RR$ of the form $f(x)=w\cdot x$ for some $w\in\RR_{>0}^d$, such that $\sup_x\mathcal Af(x)<\infty$, 
and that condition~\ref{con:lambda finite} holds; that is, $\lambda<\infty$.
If $\kappa_C>0$ and $\kappa_E>0$, then the state with no compartments is positive recurrent for $N$, all states reachable from it are also positive recurrent for $N$, and all other states are transient with finite expected time to reach the set of positive recurrent states.

\begin{remark}
    If $f$ were a Lyapunov function witnessing (via Corollary \ref{cor:lyapunov-positive-recurrence}) that $\chem_\Rate$ was positive recurrent, we would have $\mathcal Af(x)\le-1$ outside some finite set. Similarly, if $f$ were witnessing that $\chem_\Rate$ was recurrent, we would have $\mathcal Af(x)\le0$ outside some finite set. Either of these conditions would imply the assumption $\sup_x\mathcal Af(x)<\infty$ from the theorem. 
    
    The assumption $\sup_x\mathcal Af(x)<\infty$ is in fact \emph{strictly} weaker than having a Lyapunov function for recurrence, as we will see in Corollary \ref{cor:spec-frag-pos-recurrent}, when we apply it with the transient chemistry $0\to S$. However, the assumption $\sup_x\mathcal Af(x)<\infty$ is stronger than having a Lyapunov function for non-explosivity. Indeed, if $\sup_x\mathcal Af(x)<\infty$ and $f\to\infty$, then a suitable shift $g$ of $f$ will satisfy $\mathcal Ag(x)\le g(x)$.\hfill $\triangle$
\end{remark}

\begin{proof}[Proof of Theorem \ref{thm:gen-frag-pos-recurrent}]

Let $V(n)=C(n)+\alpha\sum_{x\in\ZZ_{\ge0}^d} n_x (w\cdot x)$, for some constant $\alpha>0$ to be chosen later. 
The assumption that every coordinate of $w$ is strictly positive means that $V\to\infty$ in the sense of Corollary~\ref{cor:lyapunov-positive-recurrence}. 
Our goal is to show that $\mathcal L V(n)\le -1$ outside a finite set of states, which will then allow us to conclude positive recurrence via Corollary~\ref{cor:lyapunov-positive-recurrence}.

We next compute $\mathcal L V(n)$, the generator of $V$, and bound its growth term by term.
\begin{align*}
    \mathcal LV(n)
    &=\sum_{x\in\ZZ_{\ge0}^d} \bigg[ \kappa_FS(x)n_x+\kappa_I\mu(x)(1+\alpha w\cdot x)+\alpha\sum_{\nu\to\nu'}n_x\lambda_{\nu\to\nu'}(x)\big(w\cdot(x+\nu'-\nu)-w\cdot x\big)\\
    &\qqquad-\kappa_En_x(1+\alpha w\cdot x)-\kappa_C\binom{n_x}2-\mathop{\sum_{y\in\ZZ_{\ge0}^d}}_{y\ne x}\kappa_C\frac{n_xn_y}2\bigg]\\
    &=\bigg(\sum_{x\in\ZZ_{\ge0}^d} \kappa_FS(x)n_x-\alpha\kappa_En_x(w\cdot x)+\kappa_I\mu(x)(1+\alpha w\cdot x)+\alpha n_x\mathcal Af(x)\bigg)\\
    &\quad-\kappa_C\binom{C(n)}2-\kappa_EC(n)\\
    &\le-\kappa_C\binom{C(n)}2+\left(\alpha\sup_x\mathcal Af(x)-\kappa_E\right)C(n)+(\kappa_Fw_S^{-1}-\alpha\kappa_E)\bigg(\sum_{x\in\ZZ^d}n_x(w\cdot x)\bigg)\\
    &\qquad+\kappa_I+\alpha\kappa_I(\max_j w_j)\lambda.
\end{align*}
Pick $\alpha$ large enough that $\alpha\kappa_E>\kappa_Fw_S^{-1}$ (here we used $\kappa_E>0$). Then we claim this upper bound is at most $-1$ outside a finite set of $n$. Indeed, the term $(\kappa_Fw_S^{-1}-\alpha\kappa_E)\left(\sum_{x\in\ZZ^d}n_x(w\cdot x)\right)$ is non-positive by the choice of $\alpha$, so $\mathcal LV(n)$ is at most a polynomial in $C(n)$ (with no dependence on $n$ other than through $C(n)$) with negative leading term (here we used $\kappa_C>0$). So for large enough values of $C(n)$ we have $\mathcal LV(n)\le-1$. For each individual value of $C(n)$ less than this threshold, the only term that varies as $n$ varies is $(\kappa_Fw_S^{-1}-\alpha\kappa_E)\left(\sum_{x\in\ZZ^d}n_x(w\cdot x)\right)$, and this approaches $-\infty$ as $\sum_{x\in\ZZ^d}n_x(w\cdot x)\to\infty$. It follows that $\mathcal LV(n)\le-1$ outside some finite set, as claimed.

Therefore, by Corollary \ref{cor:lyapunov-positive-recurrence} any state in a closed, irreducible component of $\CoarseStateSpace$ is positive recurrent, and from any given state the expected time for the process to enter the union of the closed irreducible components is finite. So to complete the proof it remains only to point out that the state with no compartments is a member of a (indeed, \emph{the}) closed irreducible component of $\CoarseStateSpace$; this fact follows from the fact that $\kappa_E>0$ and hence the state with no compartments is reachable from every state in $\CoarseStateSpace$.
\end{proof}
\end{theorem}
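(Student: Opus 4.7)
The plan is to apply the Foster--Lyapunov criterion of Corollary \ref{cor:lyapunov-positive-recurrence} with a Lyapunov function that simultaneously captures the two quantities being drained by compartment-level reactions: the compartment count $C(n)$ (drained quadratically by coagulation, thanks to $\kappa_C>0$) and the weighted total mass $M(n):=\sum_{x\in\ZZ_{\ge0}^d}n_x(w\cdot x)$ (drained linearly by exit, thanks to $\kappa_E>0$). I would therefore try $V(n):=C(n)+\alpha M(n)$ for a constant $\alpha>0$ to be tuned at the end. Since every coordinate of $w$ is strictly positive, the sublevel sets $\{V\le B\}$ are finite subsets of $\CoarseStateSpace$, so $V\to\infty$ in the sense required by Corollary \ref{cor:lyapunov-positive-recurrence}.

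Next I would compute $\mathcal L V(n)$ term by term. Internal chemistry contributes $\alpha\sum_x n_x\mathcal Af(x)\le \alpha K\,C(n)$ where $K:=\sup_x\mathcal Af(x)<\infty$ by hypothesis. Inflow contributes a constant bounded by $\kappa_I+\alpha\kappa_I(\max_j w_j)\lambda$, finite by Condition \ref{con:lambda finite}. Exit contributes $-\kappa_E C(n)-\alpha\kappa_E M(n)$. Coagulation conserves mass, and a short computation collapses both the same-state and distinct-state terms to $-\kappa_C\binom{C(n)}{2}$. Fragmentation also conserves mass but increases $C(n)$ by one at rate $\kappa_F S(x)n_x$, for a total of $\kappa_F S(n)\le \kappa_F w_S^{-1} M(n)$, where $w_S>0$ is the coordinate of $w$ associated with species $S$. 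Collecting:
\begin{equation*}
    \mathcal L V(n)\;\le\;-\kappa_C\binom{C(n)}{2}+(\alpha K-\kappa_E)\,C(n)+(\kappa_F w_S^{-1}-\alpha\kappa_E)\,M(n)+c_0,
\end{equation*}
where $c_0$ is an $n$-independent constant.

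The main obstacle --- and the reason for including the mass summand in $V$ --- is the fragmentation contribution $\kappa_F w_S^{-1}M(n)$, which can be arbitrarily large in $n$ and is not directly suppressed by the compartment-count Lyapunov function. The plan is to absorb it by choosing $\alpha$ large enough that $\alpha\kappa_E>\kappa_F w_S^{-1}$, which is possible exactly because $\kappa_E>0$. With $\alpha$ so chosen, the coefficient of $M(n)$ in the bound is strictly negative, and the coefficient of $\binom{C(n)}{2}$ is strictly negative because $\kappa_C>0$. The right-hand side is then a function of $(C(n),M(n))$ alone that tends to $-\infty$ as either coordinate grows, so $\mathcal LV(n)\le -1$ outside a bounded set in $(C(n),M(n))$; by finiteness of sublevel sets of $V$ this is a finite subset of $\CoarseStateSpace$.

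To finish, Corollary \ref{cor:lyapunov-positive-recurrence} will deliver positive recurrence of every state in a closed irreducible component of $\CoarseStateSpace$, together with finite expected hitting time of their union from every other state. The only remaining point is to identify the positive recurrent set explicitly, and here the hypothesis $\kappa_E>0$ does the work: from any state one can reach the empty state by successively exiting compartments, so the empty state lies in the (necessarily unique) closed irreducible component, which coincides with the set of states reachable from it. That gives the full conclusion of the theorem.
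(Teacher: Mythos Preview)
Your proposal is correct and essentially identical to the paper's proof: the same Lyapunov function $V(n)=C(n)+\alpha\sum_x n_x(w\cdot x)$, the same term-by-term computation yielding the bound $-\kappa_C\binom{C(n)}{2}+(\alpha K-\kappa_E)C(n)+(\kappa_Fw_S^{-1}-\alpha\kappa_E)M(n)+c_0$, the same choice of $\alpha$ large enough to make the $M(n)$ coefficient negative, and the same use of $\kappa_E>0$ to identify the empty state as belonging to the unique closed communicating class.
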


We now illustrate this theorem by applying it to the specific model with chemistry $0\leftrightarrows S$. Note that the result applies even when $\chem_\Rate$ is transient.

\begin{cor}\label{cor:spec-frag-pos-recurrent}
Let $N$ be the coarse-grained model associated to \eqref{eq:specific-content-dependent-fragmentation}:
\begin{equation*}
\begin{tikzcd}
    0\arrow[yshift=.5ex]{r}{\kappa_b}& S\arrow[yshift=-.5ex]{l}{\kappa_d} & &
    0\arrow[yshift=.5ex]{r}{\kappa_I}& C\arrow[yshift=-.5ex]{l}{\kappa_E} \arrow[yshift=.5ex]{r}{\kappa_FS_C}& 2C\arrow[yshift=-.5ex]{l}{\kappa_C} & &
    \mu.
\end{tikzcd}
\end{equation*}
Assume that condition~\ref{con:lambda finite} holds, i.e., $\lambda<\infty$. If both $\kappa_C>0$ and $\kappa_E>0$, then the state $(0,0,0,\dots)$ with no compartments is positive recurrent. Moreover,
\begin{itemize}
    \item If $\kappa_I>0$ and either $\kappa_b>0$ or $\mu\ne\delta_0$, then all states are reachable from $(0,0,0,\dots)$ and hence positive recurrent.
    \item If $\kappa_I>0$ and both $\kappa_b=0$ and $\mu=\delta_0$, then all states with zero $S$ are positive recurrent and all other states are transient. These other states all have finite expected time to be absorbed by the collection of zero-$S$ states.
    \item If $\kappa_I=0$, then all states with a positive number of compartments are transient, but are absorbed by the state with zero compartments in finite expected time.
\end{itemize}
\end{cor}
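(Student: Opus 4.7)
The plan is to apply Theorem~\ref{thm:gen-frag-pos-recurrent} with the linear test function $f:\ZZ_{\ge 0}\to\RR$ defined by $f(x)=x$ (that is, $w=1>0$). A one-line computation gives
\[
\mathcal A f(x)=\kappa_b-\kappa_d x\le\kappa_b,
\]
so $\sup_x \mathcal A f(x)<\infty$. Combined with the standing hypotheses $\kappa_C>0$, $\kappa_E>0$, and Condition~\ref{con:lambda finite}, the theorem immediately delivers, in one shot, the three ``abstract'' facts that do all the work: the empty state $(0,0,0,\dots)$ is positive recurrent, every state reachable from it is positive recurrent, and every other state is transient with finite expected hitting time into the recurrent class.

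After this single application of the theorem, all that remains is to identify, case by case, the closed irreducible component $C_0\subset\CoarseStateSpace$ that contains the empty state. The case $\kappa_I=0$ is immediate: the empty state is absorbing (the only mechanism capable of increasing the compartment count from zero has rate zero), so $C_0=\{(0,0,0,\dots)\}$ and every state with at least one compartment lies outside $C_0$, to which the theorem assigns transience with finite expected absorption time. The case $\kappa_I>0$ with $\kappa_b=0$ and $\mu=\delta_0$ is almost as easy: every newly created compartment carries zero $S$, the birth reaction is off, the fragmentation rate $\kappa_F S_C$ vanishes identically on zero-$S$ compartments, and both exit and coagulation preserve the property that every compartment has zero $S$. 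Hence $C_0$ is exactly the set of configurations supported at $x=0$, and configurations with positive total $S$ count lie outside $C_0$.

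The remaining case, $\kappa_I>0$ with $\kappa_b>0$ or $\mu\ne\delta_0$, is where I expect the main obstacle, since one has to exhibit an explicit path in $\CoarseStateSpace$ from the empty state to an arbitrary target configuration $n$. The plan is to build $n$ one compartment at a time: after an inflow event produces a compartment in some state $x_0\in\mathrm{supp}(\mu)$, one uses the internal reaction $0\to S$ (available whenever $\kappa_b>0$) to raise its $S$-count, the internal reaction $S\to 0$ to lower it, and fragmentation together with coagulation against auxiliary inflow-produced compartments, discarded afterwards via exit, to adjust when internal chemistry alone is insufficient (the delicate subcase being $\kappa_b>0$ with $\mu$ not placing mass at or near zero, which is where fragmentation is the essential tool). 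Once reachability of every state is established, positive recurrence of every state follows directly from the conclusion of the theorem.
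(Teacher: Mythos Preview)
Your proposal is correct and follows exactly the paper's approach: set $f(x)=x$, observe $\mathcal Af(x)=\kappa_b-\kappa_d x\le\kappa_b$, and invoke Theorem~\ref{thm:gen-frag-pos-recurrent}. The paper's proof then dismisses the ``moreover'' clauses as following from ``straightforward considerations of which states are reachable from the state with no compartments,'' which is precisely the case-by-case reachability analysis you sketch in more detail.
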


\begin{proof}
Let $f(x)=x$, and notice that $\mathcal Af(x)\le\kappa_b$, where $\mathcal A$ is as in the statement of Theorem \ref{thm:gen-frag-pos-recurrent}. Therefore, the state with no compartments is positive recurrent for $N$ by Theorem \ref{thm:gen-frag-pos-recurrent}. The ``moreover" part of this corollary follows from straightforward considerations of which states are reachable from the state with no compartments.
\end{proof}

In  specific models, one may be able to do better than Theorem \ref{thm:gen-frag-pos-recurrent} by using a more tailored Lyapunov function. The next proposition provides an example of this by extending Corollary \ref{cor:spec-frag-pos-recurrent}.

\begin{prop}\label{prop:spec-frag-pos-recurrent}
Let $N$ be the coarse-grained model associated to \eqref{eq:specific-content-dependent-fragmentation}:
\begin{equation*}
\begin{tikzcd}
    0\arrow[yshift=.5ex]{r}{\kappa_b}& S\arrow[yshift=-.5ex]{l}{\kappa_d} & &
    0\arrow[yshift=.5ex]{r}{\kappa_I}& C\arrow[yshift=-.5ex]{l}{\kappa_E} \arrow[yshift=.5ex]{r}{\kappa_FS_C}& 2C\arrow[yshift=-.5ex]{l}{\kappa_C} & &
    \mu.
\end{tikzcd}
\end{equation*}
Assume that condition~\ref{con:lambda finite} holds, i.e., $\lambda<\infty$. Consider the following parameter regimes:
\begin{itemize}
    \item[(a)] $\kappa_C>0$ and $\kappa_E>0$.

    \item[(b)] $\kappa_E^2+\kappa_E\kappa_d>\kappa_b\kappa_F$.

    \item[(c)] $\kappa_C>0$, $\kappa_d>0$, and $\kappa_E=0$.
\end{itemize}
If condition (a) or (b) is satisfied, the state $(0,0,0,\dots)$ with no compartments is positive recurrent. Moreover,
\begin{itemize}
    \item If $\kappa_I>0$ and either $\kappa_b>0$ or $\mu\ne\delta_0$, then all states are reachable from $(0,0,0,\dots)$ and hence positive recurrent.
    \item If $\kappa_I>0$ and both $\kappa_b=0$ and $\mu=\delta_0$, then all states with zero $S$ are positive recurrent and all other states are transient. These transient states  have finite expected time to be absorbed by the collection of zero-$S$ states.
    \item If $\kappa_I=0$, then all states with a positive number of compartments are transient, but are absorbed by the state with zero compartments in finite expected time.
\end{itemize}
If condition (c) is satisfied, the state $(1,0,0,\dots)$ with one empty compartment is positive recurrent. Moreover,
\begin{itemize}
    \item If $\kappa_I\kappa_b>0$, or $\kappa_F\kappa_b>0$, or $\kappa_I>0$ and $\mu\ne\delta_0$, then all states other than $(0,0,0,\dots)$ are reachable from $(1,0,0,\dots)$ and hence positive recurrent; the state $(0,0,0,\dots)$ is either transient ($\kappa_I>0$) or isolated ($\kappa_I=0)$ depending on $\kappa_I$.
    \item If $\kappa_I>0$ and both $\kappa_b=0$ and $\mu=\delta_0$, then all states with zero $S$ and a positive number of compartments are positive recurrent and all other states are transient. These other states all have finite expected time to be absorbed by the collection of
    positive recurrent states.
    \item If $\kappa_b>0$ but $\kappa_I=0=\kappa_F$, then all states with at most one compartment are positive recurrent, and all other states are transient and have finite expected time to be absorbed by the states with at most one compartment. The state with no compartments is isolated.
    \item If $\kappa_I=0$ and $\kappa_b=0$, then the state with no compartments is isolated, the state with one empty compartment is absorbing, and all other states are transient. These other states have finite expected time to be absorbed by the state with one empty compartment.
\end{itemize}
\end{prop}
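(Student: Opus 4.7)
The plan is to apply Corollary~\ref{cor:lyapunov-positive-recurrence} with a linear Lyapunov function of the form $V(n) = \alpha C(n) + \beta S(n)$, where $S(n) = \sum_x x n_x$ is the total substrate count and $\alpha, \beta > 0$ are constants I will tune to the parameter regime. Both weights are strictly positive and states have finite support, so the sublevel sets of $V$ are finite. Working through the generator transition by transition (internal birth and death, inflow, exit, fragmentation, and both types of coagulation) and collecting terms yields
\[
\mathcal LV(n) = (\beta\kappa_b - \alpha\kappa_E)\,C(n) + (\alpha\kappa_F - \beta\kappa_d - \beta\kappa_E)\,S(n) - \alpha\kappa_C\binom{C(n)}{2} + \kappa_I(\alpha + \beta\lambda).
\]
Case (a) is already covered by Corollary~\ref{cor:spec-frag-pos-recurrent}, so the novel task is to obtain $\mathcal LV(n) \le -1$ outside a finite subset of $\CoarseStateSpace$ under conditions (b) and (c).

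For case (b), the coagulation term may vanish, so I would force both linear coefficients in $\mathcal LV$ to be strictly negative at once. This requires $\beta\kappa_b/\kappa_E < \alpha < \beta(\kappa_d + \kappa_E)/\kappa_F$, with the right endpoint interpreted as $+\infty$ when $\kappa_F = 0$; such an $\alpha$ exists precisely when $\kappa_b\kappa_F < \kappa_E(\kappa_E + \kappa_d)$, which is condition (b). (In particular, (b) forces $\kappa_E > 0$, since otherwise the left-hand side would be forced to exceed a nonnegative right-hand side.) With $\alpha, \beta$ drawn from this interval, the constant $\kappa_I(\alpha + \beta\lambda)$ is absorbed by any region where $C(n) + S(n)$ is sufficiently large.

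For case (c), where $\kappa_E = 0$, the $C(n)$ coefficient reduces to the nonnegative $\beta\kappa_b$ and the $S(n)$ coefficient to $\alpha\kappa_F - \beta\kappa_d$. Here I would take $\alpha/\beta < \kappa_d/\kappa_F$ (vacuous if $\kappa_F = 0$) to exploit $\kappa_d > 0$ and make the $S(n)$ term negative, and then use the quadratic $-\alpha\kappa_C\binom{C(n)}{2}$, available since $\kappa_C > 0$, to dominate $\beta\kappa_b C(n)$ for large $C(n)$. Together these give $\mathcal LV(n) \le -1$ outside a finite set.

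Once the Lyapunov bound is in place, positive recurrence of the closed irreducible components follows from Corollary~\ref{cor:lyapunov-positive-recurrence}. The moreover statements then reduce to identifying which component contains the distinguished state in each sub-case: under (b), $(0,0,0,\dots)$ is reachable from everywhere because $\kappa_E > 0$, so the reachability analysis mirrors that of Corollary~\ref{cor:spec-frag-pos-recurrent}; under (c), $(1,0,0,\dots)$ takes over as the distinguished state because $\kappa_E = 0$ prevents the chain from destroying its last compartment, and the four sub-cases (c1)-(c4) are handled by direct inspection of which reactions can inject $S$ or change the compartment count, with (c4) being the degenerate case where $\kappa_I = \kappa_b = 0$ removes every mechanism for introducing $S$ and makes $(1,0,0,\dots)$ genuinely absorbing. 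The main obstacle is the algebraic identification in case (b): recognizing that the asymmetric-looking inequality $\kappa_b\kappa_F < \kappa_E^2 + \kappa_E\kappa_d$ is exactly the feasibility condition for the simultaneous pair of sign constraints on the drift of $V$, rather than an ad hoc assumption. Once this is seen, everything else — the generator computation, the dominating quadratic in case (c), and the reachability bookkeeping for the sub-cases — is routine.
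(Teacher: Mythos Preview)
Your proposal is correct and essentially identical to the paper's proof: both use the linear Lyapunov function $V(n)=\alpha C(n)+\beta S(n)$ (the paper normalizes the $C$-coefficient to $1$, but only the ratio matters), compute the same generator expression, and then choose the ratio so that in case (b) both linear coefficients are negative---which is exactly where the feasibility condition $\kappa_b\kappa_F<\kappa_E(\kappa_E+\kappa_d)$ appears---while in case (c) the $S(n)$ coefficient is negative and the quadratic coagulation term controls $C(n)$. The moreover clauses are likewise dispatched in both by routine reachability bookkeeping.
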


\begin{remark}
    Note that the above covers conditions (a), (b), and (c) completely: All possible combinations of other parameters are listed in the bullet points following each ``Moreover''. Furthermore, note that (a) and (c) together cover every possible case where $\kappa_C>0$ except the one where $\kappa_E=\kappa_d=0$, which is not a very interesting case since the number of $S$ cannot decrease. So the proposition is essentially complete except for the parameter regime where $\kappa_C=0$ and $\kappa_E^2+\kappa_E\kappa_d\le\kappa_b\kappa_F$. Later, in  Proposition \ref{prop:spec-frag-transient}, we will prove transience in part (though not all) of this remaining regime.\hfill $\triangle$
\end{remark}

\begin{proof}[Proof of Proposition \ref{prop:spec-frag-pos-recurrent}]
The claims about condition (a) are just repeating Corollary \ref{cor:spec-frag-pos-recurrent}. For the remaining conditions, let $\mathcal L$ denote the generator of $N$. For $n\in\CoarseStateSpace$, let $C(n)=\sum_{x=0}^\infty n_x$ and $S(n)=\sum_{x=0}^\infty xn_x$ be the total number of compartments and the total number of $S$ across compartments, respectively. Let $V(n)=\alpha S(n)+C(n)$ for some constant $\alpha>0$ to be chosen later. Then 
\begin{align*}
    \mathcal LV(n)&=-\kappa_C\binom{C(n)}2+\kappa_F S(n)-\kappa_EC(n)-\alpha\kappa_ES(n)+\alpha\kappa_bC(n)-\alpha\kappa_dS(n)\\
    &\hspace{1in}+\sum_{x=0}^\infty\kappa_I \mu(x)\big(1+\alpha x \big)\\
    &=-\kappa_C\binom{C(n)}2+(\kappa_F-\alpha(\kappa_E+\kappa_d))S(n)+(\alpha\kappa_b-\kappa_E)C(n)+\kappa_I+\kappa_I\lambda \alpha.
\end{align*}
If $\kappa_C>0$ and $\kappa_d+\kappa_E>0$, then picking $\alpha>\kappa_F/(\kappa_d+\kappa_E)$ we have that $\mathcal LV(n)\le-1$ outside a finite set. The claims about condition (c) now follow from Corollary \ref{cor:lyapunov-positive-recurrence} and straightforward reachability considerations. (This also gives an alternate proof of the claims about condition (a).)

If $\kappa_E^2+\kappa_E\kappa_d>\kappa_b\kappa_F$, then $\kappa_F/(\kappa_E+\kappa_d)<\kappa_E/\kappa_b$; pick $\alpha$ to be some number satisfying $\kappa_F/(\kappa_E+\kappa_d)<\alpha<\kappa_E/\kappa_b$ (with the convention that $\kappa_E/\kappa_b=\infty$ if $\kappa_b=0$). Then the coefficients of both $S(n)$ and $C(n)$ are negative, so $\mathcal LV(n)\le-1$ outside a finite set. The claims about condition (b) follow just as above.
\end{proof}

\begin{remark}\label{rmk:DZ contentent-dependent model}
    In section 2.B of \cite{Duso_Zechner_2020}, the model \eqref{eq:specific-content-dependent-fragmentation} is studied via simulation and moment closure methods in the case where $\kappa_b=0=\kappa_d$, where $\mu$ is Poisson with parameter $\lambda$, and where $\psi(x,y)$ is uniform over possible unordered pairs $\{y,x-y\}$. Their analysis focuses on a moment-closure approximation of the model over finite time scales, and does not address its long-term behavior. As it turns out, their model is positive recurrent. This follows directly from Proposition \ref{prop:spec-frag-pos-recurrent}; one could use either condition (a) or (b). In fact, since Proposition \ref{prop:spec-frag-pos-recurrent}(a) is just Corollary \ref{cor:spec-frag-pos-recurrent}, the existence of the stationary distribution is essentially free from Theorem \ref{thm:gen-frag-pos-recurrent}.\hfill $\triangle$
\end{remark}

In the simple model with chemistry $0\leftrightarrows S$, we can also establish results concerning transience.  Throughout what follows, we assume $\kappa_C=0$, since by Proposition~\ref{prop:spec-frag-pos-recurrent}, conditions~(a) and~(c) already imply that if $\kappa_C>0$, then $N$ is positive recurrent whenever $\kappa_d>0$ or $\kappa_E>0$.
The remaining case $\kappa_d=0=\kappa_E$ is not of particular interest, as in that regime the total number of molecules of $S$ across all compartments cannot decrease.

\begin{prop}\label{prop:spec-frag-transient}
Let $N$ be the coarse-grained model associated with \eqref{eq:specific-content-dependent-fragmentation}
\begin{equation*}
\begin{tikzcd}
    0\arrow[yshift=.5ex]{r}{\kappa_b}& S\arrow[yshift=-.5ex]{l}{\kappa_d} & &
    0\arrow[yshift=.5ex]{r}{\kappa_I}& C\arrow[yshift=-.5ex]{l}{\kappa_E} \arrow[yshift=.5ex]{r}{\kappa_FS_C}& 2C\arrow[yshift=-.5ex]{l}{\kappa_C} & &
    \mu,
\end{tikzcd}
\end{equation*}
where we again assume that condition~\ref{con:lambda finite} holds, i.e., $\lambda<\infty$. If $(\kappa_F-\kappa_E)\kappa_b>(\kappa_E+\kappa_d)\kappa_E$ and $\kappa_I>0$ and $\kappa_C=0$, then all states are transient for $N$.
\end{prop}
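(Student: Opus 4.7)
The plan is to combine non-explosivity (from Corollary~\ref{cor:frag-nonexplosive}) with a linear Lyapunov function $V(n) = \alpha S(n) + \beta C(n)$, in the same spirit as the positive-recurrence argument of Proposition~\ref{prop:spec-frag-pos-recurrent} but with $\alpha, \beta > 0$ chosen so that $\mathcal{L}V$ has a strictly \emph{positive} linear drift.

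First I would invoke Corollary~\ref{cor:frag-nonexplosive} to conclude $N$ is non-explosive. Next I would compute $\mathcal{L}V$ exactly as in the proof of Proposition~\ref{prop:spec-frag-pos-recurrent} (using $\kappa_C = 0$) to obtain
\[
\mathcal{L}V(n) = (\alpha\kappa_b - \beta\kappa_E)\,C(n) + \bigl(\beta\kappa_F - \alpha(\kappa_E + \kappa_d)\bigr)\,S(n) + \kappa_I(\beta + \alpha\lambda).
\]
The hypothesis $(\kappa_F - \kappa_E)\kappa_b > (\kappa_E + \kappa_d)\kappa_E$ implies in particular that $\kappa_E(\kappa_E + \kappa_d) < \kappa_b\kappa_F$, so the open interval $(\kappa_E/\kappa_b,\,\kappa_F/(\kappa_E + \kappa_d))$ is non-empty; choosing $\alpha/\beta$ strictly inside this interval makes both coefficients above strictly positive, and one obtains $\mathcal{L}V(n) \ge \gamma V(n) + c$ for some $\gamma > 0$ and $c > 0$.

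The next step is to convert this drift inequality into transience via a transience criterion from Appendix~\ref{sec:Lyapunov}. One natural route is to pass to a bounded test function such as $h(n) := 1 - e^{-\theta V(n)}$ for suitably small $\theta > 0$: the goal is to show $\mathcal{L}h(n) \ge 0$ outside a finite set $F$ of ``small'' states, at which point $h(N_{t\wedge\tau_F})$ is a bounded submartingale and an optional-stopping/maximum-principle argument shows that from any $n_0$ with $h(n_0) > \sup_F h$ the probability of never hitting $F$ is strictly positive. Combined with the irreducibility provided by $\kappa_I > 0$ and non-explosivity, this yields transience of every state. A parallel route exploits the fact that with $\kappa_C = 0$ compartments evolve independently, so $N$ is (up to relabeling) a multi-type branching process with immigration whose mean operator has a positive eigenvalue (supercriticality); standard branching machinery combined with perpetual immigration then forces the total population to infinity.

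The main obstacle on the bounded-test-function route is verifying $\mathcal{L}h \ge 0$ outside a finite set. An exit of a compartment containing $x$ substrate molecules decreases $V$ by $\beta + \alpha x$ in a single jump, so the corresponding contribution $\kappa_E(e^{\theta(\beta + \alpha x)} - 1)$ in the expansion of $\mathcal{L}(e^{-\theta V})$ grows exponentially in $x$ and is not automatically dominated by the first-order (drift) term. I suspect the strengthened hypothesis $(\kappa_F - \kappa_E)\kappa_b > (\kappa_E + \kappa_d)\kappa_E$---which exceeds the bare positive-eigenvalue condition $\kappa_E(\kappa_E + \kappa_d) < \kappa_b\kappa_F$ by an additive margin of $\kappa_E\kappa_b$---is used precisely to supply the room needed to dominate these large-exit contributions via a second-moment or uniform-integrability estimate on the martingale $(V(N_t) + c/\gamma)\,e^{-\gamma t}$. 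This mismatch between the ``natural'' threshold $\kappa_E(\kappa_E + \kappa_d) < \kappa_b\kappa_F$ and the stronger hypothesis also explains why the proposition leaves a gap in the parameter regime, consistent with the authors' comment following Proposition~\ref{prop:spec-frag-pos-recurrent}.
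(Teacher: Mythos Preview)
Your drift computation for $V(n)=\alpha S(n)+\beta C(n)$ is correct, and you correctly identify the obstacle: a compartment exit can drop $V$ by $\beta+\alpha x$ in one jump, so for the bounded test function $h=1-e^{-\theta V}$ the exit contribution to $\mathcal L h$ is $\kappa_E\sum_x n_x\,e^{-\theta V(n)}(e^{\theta(\beta+\alpha x)}-1)$, which is exponential in $x$ and cannot be dominated by the (merely linear in $x$) fragmentation and birth terms. No choice of $\theta$ repairs this, and the extra margin $\kappa_E\kappa_b$ in the hypothesis does not help here: the problem is structural (unbounded downward jumps of $V$), not a matter of constants. The branching-process remark is plausible heuristically but you would still need to identify the right one-type or multi-type reduction and show its Malthusian parameter is positive under exactly the stated inequality; this is not done.

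The paper avoids the unbounded-jump issue by a different choice of linear functional. Instead of $S(n)$ it uses $C_{>0}(n):=\sum_{x\ge1}n_x$, the number of \emph{nonempty} compartments, and sets $W(n)=C(n)+\alpha\,C_{>0}(n)$. The point is that every transition changes $W$ by at most $1+\alpha$: an exit of a compartment (however full) lowers $W$ by $1$ or $1+\alpha$; a death $S\to0$ lowers it by $0$ or $\alpha$; fragmentation raises it by at least $1$. With bounded jumps, the transformation $V(n)=1-1/(W(n)+1)$ works cleanly, and one gets $\mathcal L V\ge0$ outside a set $\{W<k\}$. The price of replacing $S$ by $C_{>0}$ is that the exit of a nonempty compartment now costs $(1+\alpha)$ rather than $\alpha$, which tightens the admissible range of $\alpha$ to $\kappa_E/\kappa_b<\alpha<(\kappa_F-\kappa_E)/(\kappa_E+\kappa_d)$; this is exactly where the stronger hypothesis $(\kappa_F-\kappa_E)\kappa_b>(\kappa_E+\kappa_d)\kappa_E$ enters, and it explains the gap you noticed. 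So the missing idea in your proposal is to track the count of nonempty compartments rather than the total substrate, trading a sharper drift for bounded jumps.
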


\begin{proof}
Let $C(n)=\sum_{x=0}^\infty n_x$ and $C_{>0}(n)=\sum_{x=1}^\infty n_x$, and define 
\[
W(n)=C(n)+\alpha\, C_{>0}(n),
\]
for some constant $\alpha>0$ to be chosen later. 
Set $V(n)=1-\frac{1}{W(n)+1}$.

We first compute and (lower) bound $\mathcal L V(n)$ (using the convention $0/0=0$):
\begin{align*}
    \mathcal LV(n)
    &=\sum_{x=0}^\infty\kappa_bn_x\big(V(n-e_x+e_{x+1})-V(n)\big)+\kappa_dn_xx\big(V(n-e_x+e_{x-1})-V(n)\big)\\
    &\qqquad+\kappa_I\mu(x)\big(V(n+e_x)-V(n)\big)+\kappa_En_x\big(V(n-e_x)-V(n)\big)\\
    &\qqquad+\kappa_Fxn_x\left(\sum_{y=0}^\infty\psi(x,y)\big(V(n-e_x+e_y+e_{x-y})-V(n)\big)\right)\\
    &\ge\kappa_bn_0\left(\frac1{W(n)+1}-\frac1{W(n)+1+\alpha}\right)+\kappa_dC_{>0}(n)\left(\frac1{W(n)+1}-\frac1{W(n)+1-\alpha}\right)\\
    &\qquad+\kappa_En_0\left(\frac1{W(n)+1}-\frac1{W(n)}\right)+\kappa_EC_{>0}(n)\left(\frac1{W(n)+1}-\frac1{W(n)-\alpha}\right)\\
    &\qquad+(\kappa_FC_{>0}(n)+\kappa_I)\left(\frac1{W(n)+1}-\frac1{W(n)+2}\right)\\
    &=\frac{\alpha\kappa_bn_0}{(W(n)+1)(W(n)+1+\alpha)}+\frac{\kappa_FC_{>0}(n)+\kappa_I}{(W(n)+1)(W(n)+2)}\\
    &\qquad-\frac{\kappa_En_0}{(W(n)+1)(W(n))}-\frac{(1+\alpha)\kappa_EC_{>0}(n)}{(W(n)+1)(W(n)-\alpha)}-\frac{\alpha\kappa_dC_{>0}(n)}{(W(n)+1)(W(n)+1-\alpha)}.
\end{align*}
Multiplying through by $(W(n)+1)(W(n)+2)$, the above becomes
\begin{align*}
    (W(n)+1)(W(n)+2)\mathcal LV(n)
    &=\alpha\kappa_bn_0\frac{W(n)+2}{W(n)+1+\alpha}+\kappa_FC_{>0}(n)+\kappa_I-\kappa_En_0\frac{W(n)+2}{W(n)}\\
    &\qquad-(1+\alpha)\kappa_EC_{>0}(n)\frac{W(n)+2}{W(n)-\alpha}-\alpha\kappa_dC_{>0}(n)\frac{W(n)+2}{W(n)+1-\alpha}.
\end{align*}
Let $\varepsilon>0$ be another constant to be chosen later. Notice that all four fractions immediately above are converging to $1$ as $W(n)\to\infty$. Let $B_\varepsilon$ be a set of the form $\{n:W(n)< k_\varepsilon\}$ for some number $k_\varepsilon$, such that the first fraction is bounded below by $1-\varepsilon$ on $B_\varepsilon^c$ and the latter three are bounded above by $1+\varepsilon$ on $B_\varepsilon^c$. Then, outside $B_\varepsilon$,
\begin{align*}
    (W(n)+1)(W(n)+2)\mathcal LV(n)
    &\ge \alpha\kappa_bn_0(1-\varepsilon)+\kappa_FC_{>0}(n)+\kappa_I-\kappa_En_0(1+\varepsilon)\\
    &\qquad-(1+\alpha)\kappa_EC_{>0}(n)(1+\varepsilon)-\alpha\kappa_dC_{>0}(n)(1+\varepsilon)\\
    &=\big(\kappa_F-(1+\alpha)\kappa_E(1+\varepsilon)-\alpha\kappa_d(1+\varepsilon)\big)C_{>0}(n)\\
    &\qquad+\big(\alpha\kappa_b(1-\varepsilon)-\kappa_E(1+\varepsilon)\big)n_0+\kappa_I.
\end{align*}

Since $(\kappa_F-\kappa_E)\kappa_b>(\kappa_E+\kappa_d)\kappa_E$, it follows that 
\[
\frac{\kappa_F-\kappa_E}{\kappa_E+\kappa_d}>\frac{\kappa_E}{\kappa_b}.
\]
Choose $\alpha>0$ so that 
\[
\frac{\kappa_F-\kappa_E}{\kappa_E+\kappa_d}>\alpha>\frac{\kappa_E}{\kappa_b}.
\]
Rearranging gives $\alpha\kappa_b-\kappa_E>0$ and $\kappa_F-(1+\alpha)\kappa_E-\alpha\kappa_d>0$. 
So, for some sufficiently small $\varepsilon>0$, we have
\[
\alpha\kappa_b(1-\varepsilon)-\kappa_E(1+\varepsilon)>0
\quad\text{and}\quad
\kappa_F-(1+\alpha)\kappa_E(1+\varepsilon)-\alpha\kappa_d(1+\varepsilon)>0.
\]
Let $\varepsilon$ be such.  Thus, outside $B_\varepsilon$,
\begin{align*}
   (W(n)+1)(W(n)+2)\mathcal LV(n)&\ge 0\\
    \mathcal LV(n)&\ge 0.
\end{align*}

Since $\sup_{n\in B_\varepsilon}V(n)<\inf_{n\in B_\varepsilon^c}V(n)$, and $N$ is non-explosive by Corollary~\ref{cor:frag-nonexplosive}, Theorem~\ref{thm:lyapunov-transience} implies that if $N$ starts outside $B_\varepsilon$, the probability it ever enters $B_\varepsilon$ is less than one. 
Moreover, since $\kappa_I>0$, the chain starting from $(0,0,0,\dots)$ reaches $(k_\varepsilon,0,0,0,\dots)\in B_\varepsilon^c$ with positive probability, and by the above it never returns to $(0,0,0,\dots)$ with positive probability. 
Hence $(0,0,0,\dots)$ is transient. 
If $\kappa_E=0$, every state is trivially transient; if $\kappa_E>0$, the transient state $(0,0,0,\dots)$ is reachable from every state. 
In either case, all states are transient, as claimed.
\end{proof}

\begin{remark}
    Notice the gap between the previous two results: If $\kappa_C=0$ and $\kappa_I>0$, the former says that $N$ is positive recurrent if $\kappa_E^2+\kappa_E\kappa_d>\kappa_b\kappa_F$ whereas the latter says that $N$ is transient if $\kappa_E^2+\kappa_E\kappa_d<\kappa_b\kappa_F-\kappa_b\kappa_E$. We conjecture that $N$ is transient in the case where $\kappa_b\kappa_F>\kappa_E^2+\kappa_E\kappa_d\ge\kappa_b\kappa_F-\kappa_b\kappa_E$.
    
    Limited numerical simulation supports this conjecture. With a team of four undergraduate students (Carina Guo, Olivia Guo, Leo Shen, and Yikai Zhang), we simulated the model of this chapter using a combination of the Gillespie and next reaction algorithms (see \cite{Anderson_2007}, specifically Algorithms 1 and 2, for background on these two methods). Specifically, simulations were done with parameters $\kappa_C=0$, $\kappa_b=\kappa_d=\kappa_E=\kappa_I=1$, and $\kappa_F\in\{1.9,2.0,2.1\}$. By the theorem above, the system should be positive recurrent with $\kappa_F=1.9$, whereas $\kappa_F=2.0$ and $\kappa_F=2.1$ fall into the gap (with $\kappa_F=2$ right on the boundary and not covered by the conjecture above). Indeed, trajectories kept returning to zero in the $\kappa_F=1.9$ case; they increased steadily for $\kappa_F=2.1$; and the $\kappa_F=2.0$ case was less clear.
    \hfill $\triangle$
\end{remark}

\subsection*{Acknowledgments}

Anderson gratefully acknowledges support from NSF grant DMS-2051498. Howells gratefully
acknowledges support from MUR PRIN grant number 2022XRWY7W. Rojas La Luz gratefully
acknowledges support from the Fulbright Program.

The authors would also like to extend thanks to Lucie Laurence for suggesting, in response to an earlier draft of this paper, that Proposition 3.10 should be true. Intuition provided by her informed our proof of that proposition.

\bibliography{Compartments}{}
\bibliographystyle{plain}

\appendix
\section{Necessary results related to Lyapunov functions}
\label{sec:Lyapunov}

This section is devoted to collecting all of the Lyapunov conditions we use in the present paper, and providing proofs (or references for proofs) for each.

The first result provides a necessary condition for Markov chains to explode. As stated below, it is a corollary of Theorem 2.1 of \cite{Meyn_Tweedie_1993}.

\begin{theorem}\label{thm:lyapunov-nonexplosivity}
Let $X$ be a continuous-time Markov chain on a countable state space $\mathbb S$ with generator $\mathcal L$. Suppose $V$ is a function on $\mathbb S$ which satisfies $V\to\infty$ in the sense that $\{x\in\mathbb S:V(x)<y\}$ is finite for every $y\in(-\infty,\infty)$. If there are constants $c,d\ge 0$ so that $\mathcal LV(x)\le cV(x)+d$ for all $x$, then $X$ is not explosive.
\begin{proof}
    See Theorem 2.1 in \cite{Meyn_Tweedie_1993}.
\end{proof}
\end{theorem}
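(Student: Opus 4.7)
The plan is a standard Lyapunov / martingale argument, which I would carry out as follows. First, since $\{V < 0\}$ is finite by hypothesis, $V$ is bounded below, so after shifting and adjusting the constants we may assume $V \ge 0$. Let $\zeta$ denote the explosion time of $X$, and for each positive integer $n$ set $\tau_n := \inf\{t \ge 0 : V(X_t) \ge n\}$. The sublevel sets $\{V < n\}$ are finite, so $X$ remains in a finite set on $[0,\tau_n)$ and in particular cannot explode there; hence $\tau_n < \zeta$ a.s.\ and $\tau_n \uparrow \zeta$.

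Next, assuming $c > 0$ (the case $c = 0$ is analogous and simpler), I would consider the space--time process $Y_t := e^{-ct}\bigl(V(X_t) + d/c\bigr)$. A direct computation of the space--time generator applied to $(t,x)\mapsto e^{-ct}(V(x)+d/c)$ gives $e^{-ct}\bigl(\mathcal L V(x) - cV(x) - d\bigr) \le 0$ by hypothesis, so $Y_{\cdot\wedge\tau_n}$ is a nonnegative local supermartingale; since $X$ lives in the finite set $\{V<n\}$ on $[0,\tau_n)$, this stopped process is bounded and hence a true supermartingale. Optional stopping at the bounded time $\tau_n\wedge t$ then yields
\[
   \EE[Y_{\tau_n\wedge t}] \le Y_0 = V(X_0) + d/c.
\]

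Finally, because $V(X_{\tau_n}) \ge n$ on $\{\tau_n \le t\}$, we get $\EE[Y_{\tau_n\wedge t}] \ge e^{-ct} n \,\PP(\tau_n\le t)$, and combining the two inequalities produces
\[
   \PP(\tau_n\le t) \;\le\; \frac{e^{ct}\bigl(V(X_0)+d/c\bigr)}{n} \;\xrightarrow[n\to\infty]{}\; 0,
\]
so $\PP(\zeta \le t) = 0$. Since $t>0$ was arbitrary, $\zeta = \infty$ almost surely and $X$ is non-explosive. For $c=0$, the same scheme works using the (local) supermartingale $V(X_t) - dt$ via Dynkin's formula, giving $n\,\PP(\tau_n\le t)\le V(X_0)+dt$.

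The main technical subtlety is ensuring that $Y_{\cdot\wedge\tau_n}$ is genuinely integrable so that optional stopping applies cleanly: in principle $V(X_{\tau_n})$ could be large if a single jump overshoots level $n$ by a lot, which would make the stopped process unbounded. This is handled either by localizing further (e.g.\ stopping also at the $k$-th jump and sending $k\to\infty$), or by applying Fatou's lemma in the final display and using only the one-sided bound $V(X_{\tau_n})\ge n$ that is actually needed. Once this subtlety is dispatched, the remainder of the argument is purely mechanical.
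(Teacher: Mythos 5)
The paper does not actually prove this statement---it is quoted from the literature, with the proof deferred entirely to Theorem 2.1 of Meyn and Tweedie (1993). Your proposal is, in essence, the standard proof of that cited theorem specialized to a countable state space, and it is sound. Two points deserve care, both of which you at least partially anticipate. First, the assertion ``$\tau_n<\zeta$ a.s.\ and $\tau_n\uparrow\zeta$'' is slightly overstated (on paths that never leave $\{V<n\}$ both times are $+\infty$); what your argument actually uses is only the inclusion $\{\zeta\le t\}\subseteq\{\tau_n\le t\}$ up to null sets, which does follow from the fact that on a finite set the total jump rates are bounded, so infinitely many jumps cannot accumulate before leaving $\{V<n\}$. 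Second, the integrability issue at $\tau_n$ is real: after a single jump out of the finite set $\{V<n\}$ the chain can land anywhere in the countable state space, so $V(X_{\tau_n})$ need not be bounded, and $Y_{\cdot\wedge\tau_n}$ is only a local supermartingale a priori. Your proposed fixes both work---either localize additionally at the $k$-th jump time $\sigma_k$, where the supermartingale inequality can be verified directly from the jump-chain/holding-time construction, and then let $k\to\infty$ using Fatou and the nonnegativity of $Y$; or invoke the fact that a nonnegative local supermartingale with integrable initial value is a true supermartingale. (For $c=0$ it is simplest to note that, once $V\ge0$, the hypothesis $\mathcal LV\le d$ implies $\mathcal LV\le V+d$, reducing to the case $c=1$.) With those details filled in, your argument gives a self-contained proof of the result the paper only cites.
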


The next theorem is the standard Lyapunov function criterion for positive recurrence. For a proof of the version given below, see for example Theorem 7.3 in \cite{Anderson_Cappelletti_Kim_Nguyen_2020}. (Note that Theorem 7.3 in \cite{Anderson_Cappelletti_Kim_Nguyen_2020} is actually our Corollary \ref{cor:lyapunov-positive-recurrence}; that is, they assume $V\to\infty$ instead of assuming that $X$ is not explosive. However, their proof only uses the fact that $V\to\infty$ to show non-explosivity. The fact that $V\to\infty$ is only required for non-explosivity is not a new observation; see for instance the exposition following condition (CD2) in \cite{Meyn_Tweedie_1993}.)

\begin{theorem}\label{thm:lyapunov-stuff}
Let $X$ be a continuous-time Markov chain on a countable state space $\mathbb S$ with generator $\mathcal L$. Suppose there exists a finite set $K\subset\mathbb S$ and a positive function $V$ on $\mathbb S$ such that
\[
    \mathcal LV(x)\le-1
\]
for all $x\in\mathbb S\setminus K$. Suppose further that $X$ is not explosive. Then each state in a closed, irreducible component of $\mathbb S$ is positive recurrent. Moreover, if $\tau_{x_0}$ is the time for the process to enter the union of the closed irreducible components given an initial condition $x_0$, then $\EE_{x_0}[\tau_{x_0}]<\infty$.
\begin{proof}
    See Theorem 7.3 in \cite{Anderson_Cappelletti_Kim_Nguyen_2020}.
\end{proof}
\end{theorem}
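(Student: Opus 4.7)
The plan is to reduce the theorem to the single quantitative estimate
\[
\EE_{x_0}[\tau_K]\le V(x_0)\qquad \text{for every } x_0\in\mathbb S,
\]
where $\tau_K:=\inf\{t\ge 0:X(t)\in K\}$, and then to unpack positive recurrence and the finite hitting time to the union of closed irreducible components by standard Markov-chain bookkeeping. Non-explosivity of $X$ plays exactly the role that $V\to\infty$ plays in the classical Meyn--Tweedie formulation: it licenses Dynkin's formula at hitting times without further integrability worries.

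First I would prove the master bound via Dynkin's formula applied at the bounded stopping time $\tau_K\wedge t$:
\[
\EE_{x_0}[V(X(\tau_K\wedge t))]-V(x_0)=\EE_{x_0}\!\left[\int_0^{\tau_K\wedge t}\mathcal LV(X(s))\,ds\right]\le-\EE_{x_0}[\tau_K\wedge t],
\]
where the inequality uses $\mathcal LV\le-1$ on $\mathbb S\setminus K$ together with the fact that $X(s)\notin K$ for $s<\tau_K$. Since $V\ge 0$, discarding the left-hand expectation and rearranging gives $\EE_{x_0}[\tau_K\wedge t]\le V(x_0)$, and monotone convergence as $t\uparrow\infty$ yields the master bound.

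Next I would deduce the positive-recurrence clause. Any closed irreducible component $\mathbb S_i$ must intersect $K$: otherwise a chain started inside $\mathbb S_i$ never leaves $\mathbb S_i$ and hence never hits $K$, contradicting $\EE_{x_0}[\tau_K]<\infty$. The set $K_i:=K\cap\mathbb S_i$ is therefore nonempty and finite. For any $z\in K_i$, the strong Markov property combined with the master bound applied after the first jump out of $z$ gives a finite expected return time to $K$; closedness of $\mathbb S_i$ forces this return to be a state of $K_i$. Finiteness of $K_i$ then permits a standard renewal/pigeonhole argument to single out a state with finite mean return time, and irreducibility of $\mathbb S_i$ propagates positive recurrence to all of $\mathbb S_i$.

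Finally, for the ``moreover'' clause let $R$ denote the union of the closed irreducible components and split $K=K_R\sqcup K_T$ with $K_R:=K\cap R$ and $K_T:=K\setminus R$. If the chain hits $K_R$ at time $\tau_K$ then $\tau_{x_0}\le\tau_K$ and the master bound already gives $\EE_{x_0}[\tau_{x_0}]\le V(x_0)$. Otherwise the chain hits some $y\in K_T$; each such $y$ is transient, so there is a strictly positive probability $p_y$ of reaching $R$ before the next visit to $K$, and finiteness of $K_T$ yields $p:=\min_{y\in K_T}p_y>0$. The number of visits to $K_T$ before absorption into $R$ is dominated by a geometric random variable of mean $1/p$, and each inter-visit excursion has expected length bounded in expectation via the identity $V(y)\ge\EE_y[\tau_K]\ge 1/q_y+\EE_y[\EE_{X(T_1)}[\tau_K]]$, which lets me control the post-jump quantity $\EE_y[V(X(T_1))]$ in terms of $V(y)$ alone. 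Summing these contributions bounds $\EE_{x_0}[\tau_R]$ uniformly. The main obstacle is precisely this last step of bookkeeping: one must make sure that excursion lengths from states in $K_T$ are bounded \emph{in expectation}, not merely finite almost surely, which is why the above strong-Markov identity is the linchpin for converting the master bound on hitting times into a uniform bound on excursion lengths.
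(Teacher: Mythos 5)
Your strategy is the standard Foster--Lyapunov route and is more informative than the paper's bare citation of \cite{Anderson_Cappelletti_Kim_Nguyen_2020}. The master bound $\EE_{x_0}[\tau_K]\le V(x_0)$ is correct, but Dynkin's formula should not be applied directly at $\tau_K\wedge t$: apply it at $\tau_K\wedge t\wedge\sigma_m$, where $\sigma_m$ is the $m$-th jump time (so only finitely many states are involved and integrability is automatic), discard the terminal term using $V\ge 0$ to get $\EE_{x_0}[\tau_K\wedge t\wedge\sigma_m]\le V(x_0)$, and only then use non-explosivity ($\sigma_m\to\infty$ a.s.) and monotone convergence. This is the precise sense in which non-explosivity substitutes for $V\to\infty$, and it should be made explicit.

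The genuine gap is the passage from the master bound to finite mean \emph{return} times from states of $K$. For $z\in K$ one has $\tau_K=0$, so the master bound is vacuous there; one must write $\EE_z[\tau_K^+]=1/q_z+\EE_z\bigl[\EE_{X(T_1)}[\tau_K]\bigr]\le 1/q_z+\EE_z[V(X(T_1))]$ and needs the right-hand side finite. Your ``linchpin identity'' $V(y)\ge\EE_y[\tau_K]\ge 1/q_y+\EE_y[\EE_{X(T_1)}[\tau_K]]$ is false for $y\in K$ (the middle quantity is $0$), so it cannot deliver control of $\EE_y[V(X(T_1))]$; and that finiteness genuinely does not follow from the stated hypotheses when a state of $K$ has infinitely many jump targets. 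For instance, take $\mathbb S=\{0\}\cup\{(n,k):1\le k\le n\}$ with transitions $0\to(n,n)$ at rate proportional to $n^{-2}$, $(n,k)\to(n,k-1)$ and $(n,1)\to 0$ at rate $1$, $K=\{0\}$, $V(0)=1$, $V(n,k)=k+1$: all hypotheses hold, the chain is irreducible and non-explosive, yet $\EE_0[\tau_0^+]=1+c\sum_n n^{-1}=\infty$, so $0$ is only null recurrent. The missing ingredient is $\mathcal LV(z)<\infty$ for $z\in K$ (equivalently $\EE_z[V(X(T_1))]<\infty$), which holds automatically in every application in this paper (each state has summable jump contributions, e.g.\ via Condition~\ref{con:lambda finite}) and is part of the setting of the cited result; you should state it as a hypothesis and use it where you currently invoke the false identity. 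With that repaired, your pigeonhole argument on the finite set $K$ and the geometric bound on visits to $K\setminus R$ go through, though ``reaching $R$ before the next visit to $K$'' can have probability zero (the chain may have to pass through $K$ to enter $R$); track instead the induced chain on $K$ at successive return times and note that its recurrent states must lie in $R$.
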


The following corollary is useful enough for proving positive recurrence that it is worth stating explicitly.

\begin{cor}\label{cor:lyapunov-positive-recurrence}
Let $X$ be a continuous-time Markov chain on a countable state space $\mathbb S$ with generator $\mathcal L$. Suppose there exists a finite set $K\subset\mathbb S$ and a positive function $V$ on $\mathbb S$ such that
\[
    \mathcal LV(x)\le-1
\]
for all $x\in\mathbb S\setminus K$.  Suppose further that $V\to\infty$ in the sense that $\{x\in\mathbb S:V(x)<y\}$ is finite for every $y\in(-\infty,\infty)$. Then each state in a closed, irreducible component of $\mathbb S$ is positive recurrent. Moreover, if $\tau_{x_0}$ is the time for the process to enter the union of the closed irreducible components given an initial condition $x_0$, then $\EE_{x_0}[\tau_{x_0}]<\infty$.
\begin{proof}
    By Theorem \ref{thm:lyapunov-nonexplosivity}, $X$ is not explosive, so the conclusions follow from Theorem \ref{thm:lyapunov-stuff}.
\end{proof}
\end{cor}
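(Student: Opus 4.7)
The plan is straightforward: derive the corollary as a direct combination of the two results stated immediately above it, namely Theorem~\ref{thm:lyapunov-nonexplosivity} (Lyapunov criterion for non-explosivity) and Theorem~\ref{thm:lyapunov-stuff} (Lyapunov criterion for positive recurrence assuming non-explosivity). The only thing to verify is that the hypotheses of Theorem~\ref{thm:lyapunov-nonexplosivity} are implied by the hypotheses we have.

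First I would use the drift inequality $\mathcal{L}V(x) \le -1$ on $\mathbb{S}\setminus K$, together with finiteness of $K$, to produce a global upper bound on $\mathcal{L}V$. Specifically, let $M := \max_{x\in K}\mathcal{L}V(x)$, which is finite because $K$ is a finite set. Then for every $x\in\mathbb{S}$ we have $\mathcal{L}V(x) \le \max(M,-1)$, and in particular $\mathcal{L}V(x) \le cV(x) + d$ with the trivial choice $c=0$ and $d = \max(M,-1,0) \ge 0$; this uses positivity of $V$ only to ensure $cV(x)\ge 0$, which is not even needed here since $c=0$. Combined with the assumption that $V\to\infty$ in the sense that $\{V < y\}$ is finite for every $y$, this is exactly the hypothesis of Theorem~\ref{thm:lyapunov-nonexplosivity}, so $X$ is not explosive.

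With non-explosivity established, the remaining hypotheses of Theorem~\ref{thm:lyapunov-stuff} (a positive function $V$, a finite set $K$, and $\mathcal{L}V\le -1$ off $K$) are assumed verbatim, so that theorem applies and yields both conclusions: every state in a closed irreducible component is positive recurrent, and the expected hitting time $\mathbb{E}_{x_0}[\tau_{x_0}]$ of the union of these components is finite.

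There is no genuine obstacle in this proof; it is essentially a bookkeeping step. The only subtlety worth flagging is why we are entitled to invoke Theorem~\ref{thm:lyapunov-stuff} here at all, given that its statement lists non-explosivity as a hypothesis rather than deducing it from $V\to\infty$. The parenthetical remark preceding Theorem~\ref{thm:lyapunov-stuff} already notes that the cited proof in \cite{Anderson_Cappelletti_Kim_Nguyen_2020} uses $V\to\infty$ only to obtain non-explosivity, so splitting the statement into Theorem~\ref{thm:lyapunov-stuff} plus this corollary is consistent with the literature; our job is simply to supply non-explosivity from the other assumptions, which is exactly what Theorem~\ref{thm:lyapunov-nonexplosivity} does.
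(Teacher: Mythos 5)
Your proposal is correct and follows essentially the same route as the paper, whose proof is the one-liner ``By Theorem~\ref{thm:lyapunov-nonexplosivity}, $X$ is not explosive, so the conclusions follow from Theorem~\ref{thm:lyapunov-stuff}.'' The extra detail you supply (bounding $\mathcal LV$ on the finite set $K$ to verify the hypothesis $\mathcal LV(x)\le cV(x)+d$ with $c=0$) is a reasonable elaboration of the step the paper leaves implicit, modulo the standing assumption that $\mathcal LV$ is finite on $K$.
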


The next theorem gives a Lyapunov condition for checking transience. For a proof of the version given here, see for instance the appendix of \cite{Anderson_Howells_2023}.

\begin{theorem}\label{thm:lyapunov-transience}
    Let $X$ be a non-explosive continuous-time Markov chain on a countable discrete state space $\mathbb S$ with generator $\mathcal L$. Let $B\subset\mathbb S$, and let $\tau_B$ be the time for the process to enter $B$. Suppose there is some bounded function $V$ such that for all $x\in B^c$,
    \[
    \mathcal LV(x)\ge0.
    \]
    Then $\PP_{x_0}(\tau_B<\infty)<1$ for any $x_0$ such that
    \[
    \sup_{x\in B}V(x)<V(x_0).
    \]
    \begin{proof}
        See the appendix of \cite{Anderson_Howells_2023}.
    \end{proof}
\end{theorem}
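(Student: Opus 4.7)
The plan is to apply Theorem \ref{thm:lyapunov-transience}, which requires (i) non-explosivity of $N$ and (ii) a \emph{bounded} function $V$ on $\CoarseStateSpace$ satisfying $\mathcal L V(n) \ge 0$ outside some finite exceptional set $B$. Non-explosivity is free from Corollary \ref{cor:frag-nonexplosive}, so the only analytic task is to construct such a $V$. Once that is in place, any state $n_0$ with $V(n_0) > \sup_B V$ has positive probability of never entering $B$. Since $\kappa_I > 0$, the chain started at $(0,0,\ldots)$ reaches large-$V$ states with positive probability, so $(0,0,\ldots)$ is transient; a short reachability argument then propagates transience to every state (trivially if $\kappa_E = 0$, since with no exit the total compartment count $C(N)$ is nondecreasing and tends to infinity, and using that $(0,0,\ldots)$ is reachable from every state when $\kappa_E > 0$).

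\textbf{The Lyapunov function.} I would measure a configuration $n$ by both its total number of compartments $C(n)$ and its number of nonempty compartments $C_{>0}(n) := \sum_{x\ge 1} n_x$, combining them into $W(n) := C(n) + \alpha\, C_{>0}(n)$ for a weight $\alpha > 0$ to be chosen, and setting $V(n) := 1 - \frac{1}{W(n)+1}$, which is bounded in $[0,1)$ and strictly increasing in $W$. Each transition type shifts $W$ by an explicit amount: birth of $S$ in an empty compartment raises $W$ by $\alpha$, death of $S$ in a single-$S$ compartment lowers $W$ by $\alpha$, inflow and exit change $W$ by $1$ or $1+\alpha$ depending on whether the relevant compartment is empty, and fragmentation of a nonempty compartment raises $W$ by either $1$ or $1+\alpha$. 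Using $\frac{1}{W+1} - \frac{1}{W+1+\Delta} = \frac{\Delta}{(W+1)(W+1+\Delta)}$, and bounding the fragmentation rate below by $\kappa_F C_{>0}(n)$ (since $S(n) \ge C_{>0}(n)$), I would write $\mathcal L V(n)$ as a signed sum of terms of order $1/W^2$ and then multiply through by $(W(n)+1)(W(n)+2)$ to isolate the leading-order behavior as $W \to \infty$.

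\textbf{Balancing the coefficients.} After clearing denominators, the leading-order estimate should read
\[
(W(n)+1)(W(n)+2)\,\mathcal L V(n) \;\gtrsim\; (\alpha\kappa_b - \kappa_E)\, n_0 \;+\; \bigl(\kappa_F - (1+\alpha)\kappa_E - \alpha\kappa_d\bigr)\, C_{>0}(n) \;+\; \kappa_I,
\]
and the hypothesis $(\kappa_F - \kappa_E)\kappa_b > (\kappa_E + \kappa_d)\kappa_E$ is precisely the condition that the admissible interval
\[
\frac{\kappa_E}{\kappa_b} \;<\; \alpha \;<\; \frac{\kappa_F - \kappa_E}{\kappa_E + \kappa_d}
\]
is non-empty. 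Picking any such $\alpha$ makes both bracketed coefficients strictly positive.

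\textbf{Main obstacle.} The main technical nuisance is that the ratios $(W+2)/(W \pm \cdots)$ and $(W+2)/(W+1+\cdots)$ that appear after clearing denominators are only \emph{asymptotically} equal to $1$, so the strict positivity of the leading coefficients does not immediately produce $\mathcal L V \ge 0$ pointwise. I would handle this by introducing a small slack $\varepsilon > 0$ and a threshold $k_\varepsilon$ such that every such ratio lies within $\varepsilon$ of $1$ on $B_\varepsilon^c := \{n : W(n) \ge k_\varepsilon\}$, and then choosing $\varepsilon$ small enough that the perturbed coefficients $\alpha\kappa_b(1-\varepsilon) - \kappa_E(1+\varepsilon)$ and $\kappa_F - (1+\alpha)(1+\varepsilon)\kappa_E - \alpha(1+\varepsilon)\kappa_d$ remain strictly positive. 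This yields $\mathcal L V(n) \ge 0$ on the complement of the finite set $B_\varepsilon$, and since $\sup_{B_\varepsilon} V < V(n_0)$ for any $n_0 \in B_\varepsilon^c$, Theorem \ref{thm:lyapunov-transience} delivers transience of $(0,0,\ldots)$ and hence of every state, as described in the strategy.
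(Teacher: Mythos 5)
Your proposal does not prove the statement in question. The statement is Theorem \ref{thm:lyapunov-transience} itself --- the abstract Lyapunov criterion for transience for a general non-explosive continuous-time Markov chain $X$ on a countable state space: given a bounded $V$ with $\mathcal LV \ge 0$ on $B^c$, conclude $\PP_{x_0}(\tau_B < \infty) < 1$ whenever $V(x_0) > \sup_B V$. What you have written instead is a (reasonable) sketch of the proof of Proposition \ref{prop:spec-frag-transient}, the \emph{application} of that criterion to the compartment model \eqref{eq:specific-content-dependent-fragmentation}. Your very first sentence announces that you will ``apply Theorem \ref{thm:lyapunov-transience}'' --- but that theorem is precisely what you were asked to establish, so the argument is circular as a proof of the stated result. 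Nothing in your proposal addresses why the drift condition $\mathcal LV \ge 0$ on $B^c$ for a bounded $V$ forces the chain to avoid $B$ with positive probability.

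A genuine proof of Theorem \ref{thm:lyapunov-transience} (the paper defers to the appendix of \cite{Anderson_Howells_2023}) runs along martingale lines: since $X$ is non-explosive and $V$ is bounded, the stopped process $V(X(t\wedge\tau_B))$ is a submartingale (the condition $\mathcal LV\ge0$ on $B^c$ gives nonnegative drift up to the hitting time, and boundedness of $V$ handles integrability via Dynkin's formula). If one had $\PP_{x_0}(\tau_B<\infty)=1$, then optional stopping / bounded convergence would give $V(x_0)\le \EE_{x_0}[V(X(\tau_B))]\le \sup_{x\in B}V(x)$, contradicting the hypothesis $\sup_{x\in B}V(x)<V(x_0)$. (More care is needed when $\tau_B$ can be infinite: one shows $\liminf_{t\to\infty}\EE_{x_0}[V(X(t\wedge\tau_B))]\ge V(x_0)$ and splits according to whether $\tau_B$ is finite, using that on $\{\tau_B<\infty\}$ the limit is at most $\sup_B V$.) None of this machinery appears in your proposal, so as an answer to the question asked there is a complete gap: the key idea --- the submartingale/optional-stopping argument, together with the role of non-explosivity and boundedness of $V$ in justifying it --- is missing entirely.
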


Lastly, the following condition can be used to show that a Markov chain is explosive.

\begin{cor}\label{cor:lyapunov-explosive}
Let $X$ be a continuous-time Markov chain on a countable state space $\mathbb S$ with generator $\mathcal L$. Suppose there exists a finite set $K\subset\mathbb S$ and a positive function $V$ on $\mathbb S$ such that
\[
    \mathcal LV(x)\le-1
\]
for all $x\in\mathbb S\setminus K$. Suppose further that $V$ is bounded from above. If there exists $z\in K$ such that $z$ is in a closed, irreducible component of $\mathbb S$, and there exists a $y\in\mathbb S$ reachable from $z$ such that $V(y)<\inf_{x\in K} V(x)$, then $X$ is explosive.
\begin{proof}
    On the one hand, if $X$ was not explosive, every state in a closed, irreducible component of $\mathbb S$ would be positive recurrent by Theorem \ref{thm:lyapunov-stuff}. In particular, the point $z\in K$ given by the statement of the corollary would be positive recurrent for $X$.

    On the other hand, if $X$ was not explosive, then applying Theorem \ref{thm:lyapunov-transience} to $X$ with function $-V$ would yield that $\PP_{x_0}(\tau_K<\infty)<1$ for any $x_0$ such that $\sup_{x\in K} -V(x)<-V(x_0)$. In particular, taking $x_0$ to be the $y$ given to us by the statement of the corollary, we would have $\PP_y(\tau_K<\infty)<1$. But then when the chain was started from $z$, it would reach $y$ with positive probability by assumption, and then fail to return to $z$ with positive probability.

    Thus Theorems \ref{thm:lyapunov-stuff} and \ref{thm:lyapunov-transience}, taken together, show that if $X$ were not explosive then the point $z$ given in the statement of this corollary would be both positive recurrent and transient. This is absurd, so $X$ is explosive.
\end{proof}
\end{cor}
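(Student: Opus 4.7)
The plan is to argue by contradiction: assume $X$ is not explosive, and combine Theorem \ref{thm:lyapunov-stuff} with Theorem \ref{thm:lyapunov-transience} to derive inconsistent conclusions about the state $z$. This works because the hypothesis $\mathcal LV(x)\le-1$ on $K^c$ plays a dual role: it is exactly the drift condition needed for positive recurrence via Theorem \ref{thm:lyapunov-stuff}, and by negating $V$ it also becomes the drift condition needed for Theorem \ref{thm:lyapunov-transience}.

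First I would suppose $X$ is non-explosive and observe that the hypotheses of Theorem \ref{thm:lyapunov-stuff} are met: the finite set $K$ and the positive function $V$ are given, and non-explosivity is assumed. It follows that every state lying in a closed, irreducible component of $\mathbb S$ is positive recurrent. In particular, the specified point $z$ is positive recurrent, so $\PP_z(\tau_z^+<\infty)=1$, where $\tau_z^+$ is the first return time to $z$.

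Next I would apply Theorem \ref{thm:lyapunov-transience} to the function $W:=-V$ with $B=K$. Since $V$ is positive and bounded from above by hypothesis, $W$ is bounded, which is what Theorem \ref{thm:lyapunov-transience} requires. For $x\in K^c$ we have $\mathcal LW(x)=-\mathcal LV(x)\ge 1\ge 0$, and non-explosivity is in force by assumption, so the theorem applies. Its conclusion gives $\PP_{x_0}(\tau_K<\infty)<1$ for every $x_0$ with $\sup_{x\in K}W(x)<W(x_0)$, i.e., $V(x_0)<\inf_{x\in K}V(x)$. The hypothesis of the corollary provides such an $x_0$, namely the point $y$ reachable from $z$, so $\PP_y(\tau_K<\infty)<1$.

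Finally I would combine these two facts. Because $y$ is reachable from $z$ and $K$ is finite, there is a positive probability that the chain started from $z$ first visits $y$ before returning to $z$; once at $y$, the chain has positive probability of never entering $K$ again, and hence in particular of never returning to $z$. This contradicts the positive recurrence of $z$ established in the first step. The only nontrivial step is getting both theorems to apply simultaneously under non-explosivity, and the main subtlety is verifying that the boundedness of $V$ from above (combined with positivity) is exactly what permits applying Theorem \ref{thm:lyapunov-transience} to $-V$; everything else is bookkeeping.
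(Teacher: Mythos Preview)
Your proposal is correct and follows essentially the same argument as the paper: assume non-explosivity, use Theorem~\ref{thm:lyapunov-stuff} to conclude $z$ is positive recurrent, then apply Theorem~\ref{thm:lyapunov-transience} to $-V$ with $B=K$ to get $\PP_y(\tau_K<\infty)<1$, and derive a contradiction via reachability of $y$ from $z$. Your write-up is slightly more explicit about verifying the boundedness hypothesis for $-V$, which is a nice touch; the aside that ``$K$ is finite'' in the final paragraph is not actually needed there, but this does not affect correctness.
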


\end{document}